\definecolor{cZongpu}{rgb}{0.1,0.45,0.03}
\definecolor{cChristian}{rgb}{0.6,0.5,0.0}
\title{MULTIGRADED BETTI NUMBERS OF VERONESE EMBEDDINGS}
\author{Christian Haase and Zongpu Zhang}
\date{January 31, 2026}
\newtheorem{theorem}{Theorem}[section]
\newtheorem{lemma}[theorem]{Lemma}
\newtheorem{corollary}[theorem]{Corollary}
\theoremstyle{definition}
\newtheorem{definition}[theorem]{Definition}
\theoremstyle{remark}
\newtheorem{remark}[theorem]{Remark}
\newtheorem{example}[theorem]{Example}
\newtheorem*{claim*}{Claim}
\newtheorem{claim}{Claim}
\newcommand{\defeq}{\mathrel{\mathop:}=}
\begin{document}

\maketitle

\begin{abstract}
In this paper, we study the multigraded Betti numbers of Veronese embeddings of projective spaces. Due to Hochster's formula, we interpret these multigraded Betti numbers in terms of the homology of certain simplicial complexes. By analyzing these simplicial complexes and applying Forman's discrete Morse theory, we derive vanishing and non-vanishing results for these multigraded Betti numbers.
\end{abstract}
\section{INTRODUCTION}
 A central open question in the study of syzygies is to determine the Betti table of $\mathbb{P}^m$ under the $d$-uple Veronese embedding. Veronese syzygies have been the focus of a great deal of attention, e.g.~\cite{zbMATH07330979, zbMATH07124911, zbMATH06609392, zbMATH06117694, zbMATH06620296, arXiv:2311.03625, zbMATH01587223, zbMATH05268889}. The case $m=1$ is well understood, but even the case $m=2$ is wide open.
 For the $m=2$ case, we do know which coarsely graded Betti numbers vanish~\cite{Aprodu,Green,Birkenhake}:
 \begin{align}
 \beta_{p,0}\neq0 & \iff p=0 \notag \\
 \beta_{p,1}\neq0 & \iff 1 \le p \le \binom{d+1}{2} \label{eq:coarse} \\
     \beta_{p,2}\neq0 & \iff 3d-2 \le p \le \binom{d+2}{2} -3 \,. \notag 
 \end{align}
 Here, we focus on the multigraded Betti numbers.
 In a major computational effort,~\cite{zbMATH07330979} recently computed all multigraded Betti numbers up to the sixth Veronese (and some up to the eighth). Our goal in this paper is to better understand the multigraded Betti numbers of Veronese embeddings of projective spaces. In particular, we prove new vanishing and non-vanishing results for these multigraded Betti numbers. Related vanishing theorems were proved with other methods by Castryck, Lemmens, and Hering~\cite{CastryckLemmensHering}. Their bounds are incomparable to ours.

What are the multigraded Betti numbers of Veronese embeddings of projective spaces? Let $k$ be any field. Fix integers $d\geq2$ and $m\geq 2$. We consider the set $\mathcal{A}=\{\mathbf{a}\in \mathbb{N}^{m+1}\mid |\mathbf{a}|=d\}$, where $|\mathbf{a}|\defeq\mathbf{a}_0+\mathbf{a}_1+\cdots+\mathbf{a}_m$ denotes the sum of all coordinates of $\mathbf{a}$. Let $\mathbf{a}^i$ be the $i$-th element of $\mathcal{A}$ with respect to the lexicographical order, let $n\defeq \binom{d+m}{m}$ be the cardinality of $\mathcal{A}$, and denote $\mathbb{N}\mathcal{A}\subset\mathbb{N}^{m+1}$ be the semigroup generated by $\mathcal{A}$.

Consider the semigroup homomorphism
\[\pi\colon\mathbb{N}^n\to \mathbb{Z}^{m+1}, \quad\mathbf{u}=(u_1,...,u_n)\mapsto u_1\textbf{a}^1+\cdots+u_n\mathbf{a}^n.\]
Let $S=k[x_1,\dots,x_n]$. The homomorphism $\pi$ defines a multigrading on $S$, where $\deg(x_i)=\mathbf{a}^i$. And this induces a $k$-algebra homomorphism \(\hat{\pi}\colon S\to k[\mathbb{N}\mathcal{A}]\). Note that $k[\mathbb{N}\mathcal{A}]$ is the $d^{\text{th}}$ Veronese subring of $k[y_0,y_1,\dots,y_m]$, and it has an $S$-module structure via the map $\hat{\pi}$.
The $S$-module $k[\mathbb{N}\mathcal{A}]$ is multigraded, with graded components given by \[k[\mathbb{N}\mathcal{A}]_\mathbf{a}=\begin{cases}
    k\{\mathbf{y}^{\mathbf{a}}\}, &\text{if }  \mathbf{a}\in\mathbb{N}\mathcal{A},\\
    0,&\text{if }\mathbf{a}\notin \mathbb{N}\mathcal{A}.
\end{cases}\]
Our goal is to study the multigraded Betti numbers $\beta_{p,\mathbf{b}}=\beta_{p,\mathbf{b}}(k[\mathbb{N}\mathcal{A}])$ of the multigraded $S$-module $k[\mathbb{N}\mathcal{A}]$. Geometrically, this corresponds to studying the multigraded Betti numbers of the coordinate ring of $\mathbb{P}^m$ under the $d$-uple Veronese embedding $\mathbb{P}^m\to \mathbb{P}^{n-1}$. 

In general, it is not easy to determine the multigraded Betti numbers by direct methods, using just the definition. Instead, we will approach this problem using combinatorial methods. By Hochster's formula below, we interpret the multigraded Betti number $\beta_{p,\mathbf{b}}$ in terms of the reduced simplicial homology of a certain simplicial complex $\Delta_\mathbf{b}$ (cf. Definition~\ref{defDelta}).

Let $\mathbf{b} \in \mathbb{N}\mathcal{A}$ with $|\mathbf{b}|=dj$ for a positive integer $j$.
We give lower and upper bounds for the components of $\mathbf{b}$ beyond which $\Delta_{\mathbf{b}}$ is a cone and consequently the corresponding Betti numbers vanish.

\begin{theorem}\label{vanishing1}
   Let $j$ be a positive integer, and let $|\mathbf{b}|=dj$. If $\mathbf{b}_0\geq A_j$, then $\beta_{p,\mathbf{b}}=0$ for all $p\in\mathbb{Z}$.
\end{theorem}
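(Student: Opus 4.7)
The plan is to apply Hochster's formula (as stated in the paper) to reduce the vanishing of $\beta_{p,\mathbf{b}}$ to the contractibility of the reduced simplicial complex $\Delta_{\mathbf{b}}$. Since any cone has trivial reduced homology in every degree, it suffices to prove that when $\mathbf{b}_0 \geq A_j$ the complex $\Delta_{\mathbf{b}}$ is a cone with some apex vertex $v$. I expect $A_j$ to be essentially $(d-1)j+1$, and the cone's apex to be the vertex corresponding to $\mathbf{a}^1 = (d,0,\ldots,0)$, the lexicographically first element of $\mathcal{A}$.

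To carry this out, I would first unpack Definition~\ref{defDelta} in the Veronese setting. Because $|\mathbf{a}^i|=d$ for each $i$ and $|\mathbf{b}|=dj$, the condition $\mathbf{b}-\sum_{i\in F}\mathbf{a}^i\in\mathbb{N}\mathcal{A}$ reduces to the componentwise inequality $\mathbf{b}\geq\sum_{i\in F}\mathbf{a}^i$, together with $|F|\leq j$ (divisibility by $d$ of the leftover total being automatic). Then to check the cone property, I would verify that for every face $F\in\Delta_{\mathbf{b}}$ with $1\notin F$, the set $F\cup\{1\}$ is again a face. This boils down to two conditions: (i) $|F|<j$, and (ii) $\mathbf{b}_0-\sum_{i\in F}\mathbf{a}^i_0\geq d$.

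The threshold then comes out naturally. For every $i\geq 2$ we have $\mathbf{a}^i_0\leq d-1$, so any face $F$ with $1\notin F$ and $|F|=k$ satisfies $\sum_{i\in F}\mathbf{a}^i_0\leq(d-1)k$. When $k<j$ this gives $\mathbf{b}_0-\sum_{i\in F}\mathbf{a}^i_0\geq\mathbf{b}_0-(d-1)(j-1)$, which is $\geq d$ as soon as $\mathbf{b}_0\geq(d-1)j+1$. The boundary case $k=j$ is handled separately: a face $F$ of size $j$ with $1\notin F$ would force $\mathbf{b}=\sum_{i\in F}\mathbf{a}^i$ and hence $\mathbf{b}_0\leq(d-1)j$, contradicting the hypothesis. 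Finally, the apex $1$ itself lies in $\Delta_{\mathbf{b}}$ because $\mathbf{b}_0\geq(d-1)j+1\geq d$. Together these show $\Delta_{\mathbf{b}}$ is a cone, and the theorem follows from Hochster's formula.

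The main obstacle I anticipate is matching the exact value of $A_j$ used by the authors. The cone argument above is essentially combinatorial and only needs the crude estimate $\mathbf{a}^i_0\leq d-1$; a finer analysis taking into account the other coordinates $\mathbf{b}_1,\ldots,\mathbf{b}_m$ (or a more refined apex selection, e.g.\ depending on which $\mathbf{b}_i$ are nonzero) could in principle lower the threshold, and the authors may well use such a sharper bound. A secondary check will be that the definition of $\Delta_{\mathbf{b}}$ in Definition~\ref{defDelta} really coincides with the ``componentwise leftover in $\mathbb{N}\mathcal{A}$'' description above, rather than a more restrictive notion that would complicate the verification of $F\cup\{1\}\in\Delta_{\mathbf{b}}$.
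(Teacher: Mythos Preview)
Your strategy is exactly the paper's: show $\Delta_{\mathbf{b}}$ is a cone with apex the vertex $1$ corresponding to $\mathbf{a}^1=(d,0,\ldots,0)$, then apply Hochster's formula. The structure of the argument (rule out $|F|=j$, then for $|F|\le j-1$ bound $\sum_{i\in F}\mathbf{a}^i_0$ and compare to $\mathbf{b}_0-d$) also matches.

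The gap is the value of $A_j$. The paper defines
\[
A_j \;=\; \sum_{i\in\{1,\dots,j\}\cap\{1,\dots,n\}} \mathbf{a}^i_0,
\]
which agrees with your guess $(d-1)j+1$ only for $j\le 3$ (in the $m=2$ case) and is strictly smaller thereafter; e.g.\ $A_4=4d-4<4d-3$. Your crude estimate $\mathbf{a}^i_0\le d-1$ therefore proves a weaker statement than Theorem~\ref{vanishing1}. The fix is the one you half-anticipated but did not carry out: since the lex order makes the sequence $\mathbf{a}^i_0$ weakly decreasing, a subset $F\subseteq\{2,\dots,n\}$ with $|F|\le j-1$ satisfies the sharp bound
\[
\sum_{i\in F}\mathbf{a}^i_0 \;\le\; \sum_{i=2}^{j}\mathbf{a}^i_0 \;=\; A_j - d,
\]
and hence $\mathbf{b}_0-d-\sum_{i\in F}\mathbf{a}^i_0\ge 0$ as soon as $\mathbf{b}_0\ge A_j$. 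The boundary case $|F|=j$ is handled the same way: with $1\notin F$ one gets $\mathbf{b}_0=\sum_{i\in F}\mathbf{a}^i_0\le\sum_{i=2}^{j+1}\mathbf{a}^i_0=A_j-d+\mathbf{a}^{j+1}_0<A_j$ (for $j\le n-1$; the case $j\ge n$ is immediate). No refinement of the apex or use of the other coordinates $\mathbf{b}_1,\dots,\mathbf{b}_m$ is needed --- just the sharp lex-order bound in place of $(d-1)k$.
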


Here, $A_j = A_j(m,d)$ is a combinatorial bound (cf.~\S\ref{VANISHING THEOREM 1}).

\begin{theorem}\label{vanishing2}
   Let $j$ be a positive integer such that $j\geq d+1$, and let $|\mathbf{b}|=dj$. If $\mathbf{b}_0\leq \tilde{l}_j$, then $\beta_{p,\mathbf{b}}=0$ for all $p\in \mathbb{Z}$.
\end{theorem}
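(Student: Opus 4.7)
By Hochster's formula (Definition~\ref{defDelta}), vanishing of $\beta_{p,\mathbf{b}}$ for every $p$ is equivalent to the reduced simplicial homology of $\Delta_{\mathbf{b}}$ being trivial. We plan to prove this by exhibiting $\Delta_{\mathbf{b}}$ as a cone, hence contractible, in direct analogy with Theorem~\ref{vanishing1}: there, the hypothesis $\mathbf{b}_0 \geq A_j$ makes $\mathbf{a}^1 = d\mathbf{e}_0$ a natural cone apex because $\mathbf{b}$ can always afford a pure $y_0^d$ piece. In the present setting the apex must instead live away from the zeroth coordinate.

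The natural candidate is $\mathbf{v} = d\mathbf{e}_k$ for some $k \in \{1, \ldots, m\}$ with $\mathbf{b}_k$ large. Under our hypotheses, $\mathbf{b}_1 + \cdots + \mathbf{b}_m = dj - \mathbf{b}_0 \geq d(d+1) - \tilde{l}_j$, so a pigeonhole argument yields such a $k$. The cone property, namely $F \cup \{\mathbf{v}\} \in \Delta_{\mathbf{b}}$ for every face $F$, unpacks to the requirement $\mathbf{b} - \mathbf{v} - \sum_{i \in F} \mathbf{a}^i \in \mathbb{N}\mathcal{A}$. Since $\mathbb{N}\mathcal{A}$ is the set of nonnegative integer vectors with coordinate sum divisible by $d$, this reduces to nonnegativity in every coordinate, with the $k$-th coordinate being the binding constraint. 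The bound $\mathbf{b}_0 \leq \tilde{l}_j$ limits how many monomials with $\mathbf{a}^i_0 > 0$ can enter $F$, which in turn caps how much of $\mathbf{b}_k$ the face $F$ can consume, leaving enough room to subtract off another $d\mathbf{e}_k$.

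The main difficulty is handling maximal faces: those $F$ for which the residual $\mathbf{b} - \sum_{i \in F} \mathbf{a}^i$ is already too short to accommodate an additional copy of $\mathbf{v}$. There, the extension $F \cup \{\mathbf{v}\}$ fails unless $\mathbf{v} \in F$ already. The combinatorial heart of the argument is therefore to show that every maximal face already contains $d\mathbf{e}_k$, and it is this requirement that pins down the precise value of $\tilde{l}_j$; the condition $j \geq d+1$ should provide the slack that forces the inclusion. Should the naive cone argument fail at the boundary, Forman's discrete Morse theory (cited in the abstract) provides a backup plan: match $F \leftrightarrow F \cup \{\mathbf{v}\}$ whenever $\mathbf{v}$ admits insertion, then analyse the remaining critical cells directly using the smallness of $\mathbf{b}_0$.
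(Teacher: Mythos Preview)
Your high-level strategy is exactly the paper's: show $\Delta_{\mathbf{b}}$ is a cone with apex $d\mathbf{e}_k$ for some $k\in\{1,\dots,m\}$ chosen by pigeonhole so that $\mathbf{b}_k\geq\lceil(dj-\mathbf{b}_0)/m\rceil$, and reduce the cone condition to the single inequality $\mathbf{b}_k-d-\sum_{i\in F}\mathbf{a}^i_k\geq 0$ for each face $F$ with $d\mathbf{e}_k\notin F$. But the two mechanisms you propose for forcing that inequality do not work. Saying ``$\mathbf{b}_0\leq\tilde{l}_j$ limits how many monomials with $\mathbf{a}^i_0>0$ can enter $F$'' misses the point: the dangerous monomials for the $k$-th coordinate are precisely those with $\mathbf{a}^i_0=0$, since they carry the largest $\mathbf{a}^i_k$, and nothing you have said bounds how many of those $F$ contains. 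Your fallback, that every maximal face must already contain $d\mathbf{e}_k$, is a red herring: the paper makes no case split on face size, and no discrete Morse theory enters this theorem.

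The missing ingredient is the extremal function
\[
f_l\ :=\ \max\Bigl\{\,\textstyle\sum_{i\in\sigma}\mathbf{a}^i_k\ :\ \sigma\subseteq\{1,\dots,n\},\ \textstyle\sum_{i\in\sigma}\mathbf{a}^i_0=l\,\Bigr\},
\]
the maximal $k$-th-coordinate mass among all subsets with zeroth-coordinate mass exactly $l$. Lemma~\ref{monotonicity} shows $f_l$ is nondecreasing in $l$, and $\tilde{l}_j$ is \emph{defined} as the largest $l$ with $\lceil(dj-l)/m\rceil\geq f_l$. Now for any face $F$ not containing $d\mathbf{e}_k$, set $l':=\sum_{i\in F}\mathbf{a}^i_0\leq\mathbf{b}_0\leq\tilde{l}_j$; since $d\mathbf{e}_k$ contributes nothing to the zeroth coordinate, $F\cup\{d\mathbf{e}_k\}\in C_{l'}$, and one gets the uniform chain
\[
\mathbf{b}_k\ \geq\ \Bigl\lceil\tfrac{dj-\tilde{l}_j}{m}\Bigr\rceil\ \geq\ f_{\tilde{l}_j}\ \geq\ f_{l'}\ \geq\ \sum_{i\in F\cup\{d\mathbf{e}_k\}}\mathbf{a}^i_k\ =\ d+\sum_{i\in F}\mathbf{a}^i_k.
\]
This is exactly the inequality you need, for all faces at once. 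The constraint on the zeroth coordinate is transferred to the $k$-th coordinate through the extremal quantity $f_l$, not through counting monomials or isolating maximal faces.
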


Again, $\tilde{l}_j=\tilde{l}_j(m,d)$ is a combinatorial bound, but we do not have closed formulae (cf.~\S\ref{VANISHING THEOREM 2}). 

In other words, by symmetry, for each $s=0,1,\dots,m$, the set \[\operatorname{pr}_s(\{\mathbf{c}\in\mathbb{Z}^{m+1}\mid |\mathbf{c}|=dj, \text{ and } \beta_{p,\mathbf{c}}\neq 0 \text{ for some } p\in\mathbb{Z}\})\] has the upper bound $A_j-1$. We show that the upper bounds are optimal (cf. Theorem~\ref{optimalA}). It has the lower bound $\tilde{l}_j+1$ if we assume $j\geq d+1$. Here, $\operatorname{pr}_s$ denotes the projection onto the $s+1$-th coordinate.


Applying Forman's discrete Morse theory~\cite{zbMATH02041483}, we compute the exact Betti numbers when $\mathbf{b}_0=A_{p+1}-1$ and state the result as follows.
\begin{theorem}\label{nonvanishing D}
    
    Let $m\leq p\leq \binom{d+m-1}{m}-1$. Suppose $|\mathbf{b}|=d(p+1)$ and $\mathbf{b}_0=A_{p+1}-1$. We define a set $D$ that depends on $p$ and $\mathbf{b}$. Then the simplicial complex $\Delta_{\mathbf{b}}$ is homotopy equivalent to the wedge sum of $\#D$ copies of the sphere $S^{p-1}$. Hence, the Betti number $\beta_{p,\mathbf{b}}=\# D$.
\end{theorem}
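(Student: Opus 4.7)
The plan is to apply Hochster's formula, which translates the computation of $\beta_{p,\mathbf{b}}$ into the reduced simplicial homology $\tilde{H}_{p-1}(\Delta_\mathbf{b};\mathbb{C})$. Because the theorem asserts a homotopy type rather than just a dimension count, I will aim directly for the homotopy equivalence $\Delta_\mathbf{b}\simeq\bigvee_{\#D}S^{p-1}$; the Betti number $\beta_{p,\mathbf{b}}=\#D$ will then follow immediately. The natural tool is Forman's discrete Morse theory: construct an acyclic matching $\mathcal{M}$ on the face poset of $\Delta_\mathbf{b}$ whose critical cells give a minimal CW model of $\Delta_\mathbf{b}$ up to homotopy.

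The matching should be designed so that its non-empty critical cells are in bijection with $D$ and all have dimension $p-1$. Here the extremal condition $\mathbf{b}_0=A_{p+1}-1$ is crucial: by Theorem~\ref{vanishing1}, raising $\mathbf{b}_0$ by just one would already force $\Delta_\mathbf{b}$ to be a cone, so at $\mathbf{b}_0=A_{p+1}-1$ we sit on the boundary of the coning regime, and only a tightly controlled list of faces should fail to admit a canonical pairing. The natural rule is to associate to each face $F\in\Delta_\mathbf{b}$ the deficiency $\mathbf{b}-\sum_{i\in F}\mathbf{a}^i$ and pick a canonical vertex $v_F$ by a greedy procedure based on the lexicographic order on $\mathcal{A}$; then pair $F$ with $F\triangle\{v_F\}$ whenever the latter is still in $\Delta_\mathbf{b}$. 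The unmatched faces should be exactly those indexed by $D$, and a direct combinatorial check should show that they all sit in dimension $p-1$.

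Once the matching is in place, Forman's theorem provides a CW model of $\Delta_\mathbf{b}$ whose cells are exactly the critical cells of $\mathcal{M}$. Since $p\geq m\geq 2$ gives $p-1\geq 1$ and all non-empty critical cells lie in the single dimension $p-1$, the attaching maps of the $(p-1)$-cells have no intermediate skeleton to wrap around and must factor through a point. The CW model is therefore a wedge of $\#D$ copies of $S^{p-1}$, and Hochster's formula delivers $\beta_{p,\mathbf{b}}=\#D$ as a consequence.

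The hardest step will be verifying acyclicity of the proposed matching: one must show that no directed cycle exists in the modified Hasse diagram of $\Delta_\mathbf{b}$ obtained by reversing the edges of $\mathcal{M}$. The standard strategy is to exhibit a monovariant---for instance, the lexicographically smallest coordinate of the deficiency vector, or the smallest index of a vertex forced to appear---which strictly decreases along any alternating path, thereby ruling out cycles. A secondary challenge will be the precise bookkeeping needed to identify the unmatched faces with $D$; this will essentially recapitulate the combinatorial argument behind the definition of $A_{p+1}$ from Section~\ref{VANISHING THEOREM 1}, this time distinguishing the boundary case $\mathbf{b}_0 = A_{p+1}-1$ from the cone case $\mathbf{b}_0 \geq A_{p+1}$ and carefully enumerating the residual obstructions, which is exactly what the set $D$ should encode.
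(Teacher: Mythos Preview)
Your overall strategy---build an acyclic matching on $\Delta_{\mathbf{b}}$, show its nonempty critical cells lie in dimension $p-1$ and are indexed by $D$, then invoke Forman's theorem and Hochster---is exactly the paper's. Where you diverge is in the design of the matching. You propose a face-dependent greedy rule $F\mapsto F\triangle\{v_F\}$ and flag acyclicity as ``the hardest step,'' to be handled by some monovariant. The paper instead takes the simplest possible matching: a \emph{single fixed} pairing vertex, namely $\mathbf{a}^1=(d,0,\dots,0)$, pairing $\sigma\leftrightarrow\sigma\cup\{1\}$ whenever both lie in $\Delta_{\mathbf{b}}$ (this is Example~\ref{dvf}). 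With a fixed vertex, acyclicity is a two-line observation, not a hard step. All of the combinatorial work then moves into Lemma~\ref{criticalsimplex} (and its general-$m$ analogue Lemma~\ref{m-criticalsimplex}), which identifies $\{\sigma\in\Delta_{\mathbf{b}}:\sigma\cup\{1\}\notin\Delta_{\mathbf{b}}\}$ with $\{\tau\setminus\{1\}:\tau\in D'\}$; the hypothesis $\mathbf{b}_0=A_{p+1}-1$ is used there to force $|\sigma|=p$ and then $\{2,\dots,\binom{r}{2}\}\subseteq\sigma$, after which membership in $D$ drops out of the remaining coordinate constraints. Your greedy scheme could presumably be made to work, but note that a face-dependent $v_F$ carries a hidden consistency obligation (if $F$ is matched to $G=F\triangle\{v_F\}$ you need $v_G=v_F$), and any genuine refinement beyond the fixed-vertex matching gains nothing here since the paper's critical cells already sit in a single positive dimension. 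The fixed-vertex choice buys trivial acyclicity and isolates the real content---the enumeration of obstructions---into one concrete lemma rather than spreading it across both the matching construction and the acyclicity argument.
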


 \begin{figure}[htbp]\label{impo}
    \centering
    \includegraphics[width=0.6\textwidth]{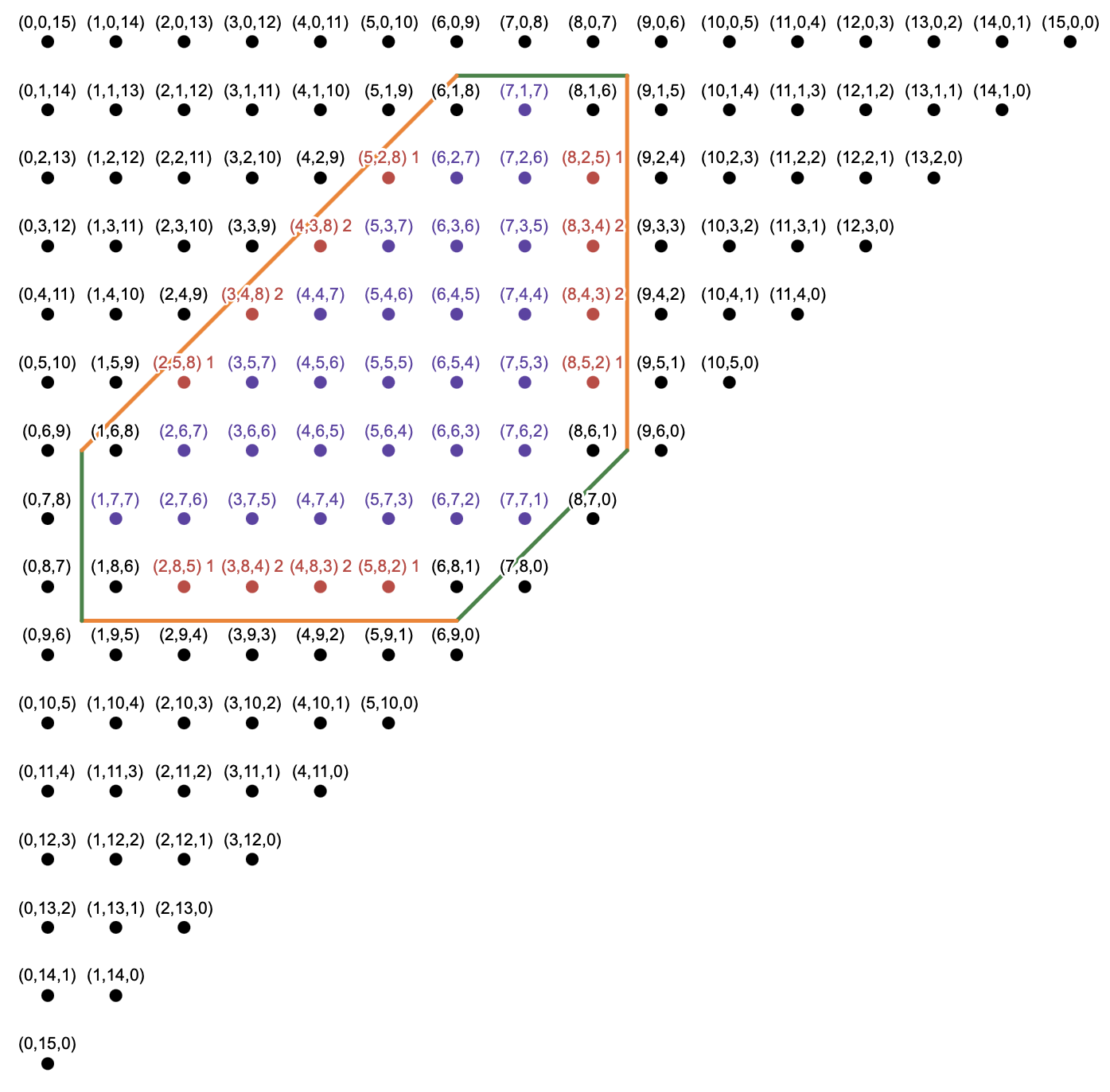} 
    \caption{Multigraded Betti numbers $\beta_{4,\mathbf{b}}$ for $|\mathbf{b}|=15$. }
    \label{fig:example}
\end{figure}

All these theorems can be illustrated by the example shown in Figure $1$. In this example, $m=2$, $d=3$, $p=4$, and $j=p+1=5$. We have $A_5=9$ and $\tilde{l}_5=0$. The orange lines represent the bounds $A_5$, and the green lines represent the bounds $\tilde{l}_5$. The black dots denote vanishing Betti numbers. For the red dots, the exact Betti numbers are computed and shown next to their coordinates. For the purple dots, the Betti numbers are generally unknown.

The simplicial complex $\Delta_{\mathbf{b}}$ has been studied under the name bounded degree (hyper)graph complex,
see for example the survey by Wachs~\cite{zbMATH05040242}.
The majority of those results focuses on the graph case $d=2$ with varying $m$, and thus is somewhat complementary to our work (varying $d$, sometimes $m=2$). For $m=d=2$ all multigraded Betti numbers are known.



For $d=2$, the quadratic Veronese, Reiner and Roberts~\cite{zbMATH01491052} studied the representation theoretic decomposition of the homology (in characteristic $0$) of $\Delta_\mathbf{b}$. As a corollary, one obtains a formula for $\dim_k \tilde{H}_{p-1}(\Delta_\mathbf{b};k)$ in terms of Kostka numbers. By Hochster's formula, this yields a formula for the multigraded Betti number $\beta_{p,\mathbf{b}}$ of the quadratic Veronese embedding.

Still for $d=2$, Dong~\cite{zbMATH01917896} studied the homotopy type of bounded degree graph complexes and provided a complete characterization of when $\Delta_\mathbf{b}$ is contractible~\cite[Theorem~3.8]{zbMATH01917896}.
For the special case of the quadratic Veronese, this contains our vanishing results.
He also showed that under certain conditions $\Delta_\mathbf{b}$ is homotopy equivalent to a wedge of equidimensional  spheres~\cite[Theorem~3.10]{zbMATH01917896}. 
The $d=2$ case of our Theorem~\ref{nonvanishing D} follows from Dong's considerations.

For general $d$, Bj\"orner, Lov\'asz, Vre\'cica and \v{Z}ivaljevi\'c~\cite{zbMATH00540563} and
Athanasiadis~\cite[Theorem~1.2]{zbMATH02123194}
studied the connectivity of the ``hypergraph matching complex'' $\Delta_{(1,\ldots,1)}$.
They show that the complex is $\lfloor \frac{m-2d}{d+1} \rfloor$-connected. The weight space technique in~\cite{zbMATH01643647} can be used to lift this result to bounded degree hypergraph complexes: $\Delta_\mathbf{b}$ is $\lfloor \frac{|\mathbf{b}|-2d-1}{d+1} \rfloor$-acyclic over a field of characteristic $0$. In the language of Betti numbers, this means that for each $1\leq q\leq m$, $\beta_{p,q}=0$ if $0\leq p\leq (q-1)d$. For the projective plane ($m=2$) this is a consequence of \eqref{eq:coarse}.
\medskip

The paper is organized as follows. After reviewing Hochster’s formula and basic background on discrete Morse theory in Section \ref{section2}, we prove Theorem \ref{vanishing1} in Section \ref{VANISHING THEOREM 1}, Theorem \ref{vanishing2} in Section \ref{VANISHING THEOREM 2}, and Theorem \ref{nonvanishing D} in Section \ref{NON-VANISHING RESULTS}. For simplicity, we assume $m=2$ in Section \ref{VANISHING THEOREM 1}, \ref{VANISHING THEOREM 2}, \ref{NON-VANISHING RESULTS}. The same arguments apply to $m>2$, and the general results are stated in Section \ref{general}. Section \ref{optimal} is devoted to showing that the bounds $A_{p+1}$ are optimal.

\subsection*{Acknowledgements}
We thank Milena Hering and the authors of~\cite{zbMATH07330979} for enlightening conversations and encouragement.
Both authors had the opportunity to discuss the present ideas during the Thematic Program in Commutative Algebra and Applications at the Fields Institute in Toronto, Canada.
We also thank Vic Reiner for pointing us to bounded degree (hyper)graph complexes and the like.

CH was supported by the SPP 2458 ``Combinatorial Synergies'', funded by the Deutsche Forschungsgemeinschaft (DFG, German Research Foundation). ZZ was supported by the Berlin Mathematical School of MATH+, the Berlin Mathematics Research Center.

    
\section{BACKGROUND}\label{section2}
\subsection{Hochster's formula}
In this subsection, we provide background on the simplicial complex $\Delta_\mathbf{b}$ and Hochster's formula. The references for these are \cite[Section~9.1]{zbMATH02190625} and \cite[Section~12.C]{zbMATH00835749}.

\begin{definition}\label{defDelta}
    For any degree $\mathbf{b}\in \mathbb{N}\mathcal{A}$, we define the simplicial complex $\Delta_\mathbf{b}$ by \[\Delta_\mathbf{b}=\left\{I\subset\{1,...,n\}\mid \mathbf{b}-\sum_{i\in I}\mathbf{a}^i \in \mathbb{N}\mathcal{A}\right\}.\]
\end{definition}
\begin{remark}
    Since $\mathcal{A}=\{\mathbf{a}\in \mathbb{N}^{m+1}\mid |\mathbf{a}|=d\}$, the condition $\mathbf{b}-\displaystyle\sum_{i\in I}\mathbf{a}^i \in \mathbb{N}\mathcal{A}$ is equivalent to the condition $\mathbf{b}-\displaystyle\sum_{i\in I}\mathbf{a}^i\geq \mathbf{0}$.
\end{remark}
\begin{theorem}[Hochster's formula]
    $\beta_{p,\mathbf{b}}=\dim_k\tilde{H}_{p-1}(\Delta_\mathbf{b};k)$.
\end{theorem}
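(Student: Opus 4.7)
The plan is to compute $\Tor_p^S(\mathbb{C}, \mathbb{C}[\mathbb{N}\mathcal{A}])_\mathbf{b}$ directly from the Koszul resolution of the residue field and identify the resulting complex, in multidegree $\mathbf{b}$, with a chain complex of $\Delta_\mathbf{b}$. Recall that by definition of multigraded Betti numbers,
\[
\beta_{p,\mathbf{b}} = \dim_\mathbb{C} \Tor_p^S(\mathbb{C}, \mathbb{C}[\mathbb{N}\mathcal{A}])_\mathbf{b},
\]
where $\mathbb{C} = S/\mathfrak{m}$ with $\mathfrak{m} = (x_1,\dots,x_n)$.

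First I would write down the multigraded Koszul resolution $K_\bullet \to \mathbb{C}$, in which $K_p = \bigoplus_{|I|=p} S(-\mathbf{a}^I)$ with $\mathbf{a}^I \defeq \sum_{i\in I}\mathbf{a}^i$, and the differential sends the generator $e_I$ to $\sum_{i\in I}\pm x_i\, e_{I\setminus\{i\}}$. Tensoring with $\mathbb{C}[\mathbb{N}\mathcal{A}]$ over $S$, the $p$-th module is $\bigoplus_{|I|=p}\mathbb{C}[\mathbb{N}\mathcal{A}](-\mathbf{a}^I)$, and I pass to the $\mathbf{b}$-graded strand. Using the explicit description $\mathbb{C}[\mathbb{N}\mathcal{A}]_\mathbf{c} = \mathbb{C}\{\mathbf{y}^{\mathbf{c}}\}$ when $\mathbf{c}\in\mathbb{N}\mathcal{A}$ and $0$ otherwise, the summand indexed by $I$ contributes in degree $\mathbf{b}$ precisely when $\mathbf{b}-\mathbf{a}^I\in\mathbb{N}\mathcal{A}$, that is, exactly when $I\in\Delta_\mathbf{b}$.

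Next I would identify this complex with the (augmented/reduced) simplicial chain complex of $\Delta_\mathbf{b}$ with $\mathbb{C}$-coefficients. After fixing the standard ordering on $\{1,\dots,n\}$, the basis element $\mathbf{y}^{\mathbf{b}-\mathbf{a}^I}\otimes e_I$ for $I\in\Delta_\mathbf{b}$ with $|I|=p$ corresponds to the $(p-1)$-simplex $I$; the action of $x_i$ on $\mathbb{C}[\mathbb{N}\mathcal{A}]$ is multiplication by $\mathbf{y}^{\mathbf{a}^i}$, which on the $\mathbf{b}$-strand is either $1$ (when the target face is in $\Delta_\mathbf{b}$) or $0$. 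Keeping track of the Koszul signs, the induced differential agrees with the simplicial boundary $\partial_{p-1}\colon C_{p-1}(\Delta_\mathbf{b};\mathbb{C})\to C_{p-2}(\Delta_\mathbf{b};\mathbb{C})$. In particular, the rightmost piece of the complex (degrees $p=0,1$) recovers the augmentation $C_0(\Delta_\mathbf{b};\mathbb{C})\to\mathbb{C}$ provided one also includes the contribution from $I=\emptyset$, which is nonzero in degree $\mathbf{b}$ iff $\mathbf{b}\in\mathbb{N}\mathcal{A}$, i.e., iff $\Delta_\mathbf{b}$ is nonempty.

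Taking $p$-th homology yields $\Tor_p^S(\mathbb{C},\mathbb{C}[\mathbb{N}\mathcal{A}])_\mathbf{b} \cong \tilde{H}_{p-1}(\Delta_\mathbf{b};\mathbb{C})$, and dimensions give the claimed formula. I expect the main obstacle to be bookkeeping rather than conceptual: one must verify that the signs produced by the Koszul differential match the simplicial boundary under the chosen ordering of vertices, and that the shift by one in the homological index is handled correctly, so that $\Tor_0$ sees $\tilde{H}_{-1}$ (nonzero exactly when $\Delta_\mathbf{b}=\{\emptyset\}$, i.e., $\mathbf{b}=\mathbf{0}$). Once these identifications are made, the theorem follows.
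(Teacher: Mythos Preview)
Your argument is the standard and correct derivation: resolve $\mathbb{C}$ by the Koszul complex, tensor with $\mathbb{C}[\mathbb{N}\mathcal{A}]$, take the multidegree-$\mathbf{b}$ strand, and observe that the resulting complex is exactly the augmented simplicial chain complex of $\Delta_\mathbf{b}$ (with the index shift $|I|=p \leftrightarrow \dim I = p-1$). The bookkeeping you flag---signs and the augmentation in degree $p=0$---is routine and poses no difficulty.

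There is nothing to compare against, however: the paper does not prove this statement. It is listed as background in \S\ref{section2} and attributed to \cite[Section~9.1]{zbMATH02190625} and \cite[Section~12.C]{zbMATH00835749}, which give exactly the Koszul argument you outline. So your proposal is not an alternative route but rather the proof the paper defers to the literature.
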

\subsection{Discrete Morse theory}
In this subsection, we provide background on discrete Morse theory (cf.~\cite{zbMATH02041483, zbMATH07258477}).

Many mathematical problems naturally lead to the study of the topology of simplicial complexes. However, general techniques for this purpose are limited. On the other hand, powerful and useful theories—such as Morse theory—have been developed for smooth manifolds. Forman's discrete Morse theory, which may be applied to any simplicial complex, is a combinatorial adaptation of classical Morse theory.

We begin with a simplicial complex $\Delta$. If $\alpha\in\Delta$ and $\dim\alpha=q$, then we sometimes denote this by $\alpha^{(q)}$. For simplices $\alpha$ and $\beta$ we will use the notation $\alpha<\beta$ to indicate that $\alpha$ is a proper face of $\beta$. We now introduce the discrete analogue of vector fields.
\begin{definition}[{\cite[Definition~3.3]{zbMATH02041483}}]
A discrete vector field $V$ on $\Delta$ is a collection of pairs $\{\alpha^{(q)}<\beta^{(q+1)}\}$ of simplices of $\Delta$ such that each simplex is in at most one pair of $V$.
\end{definition}
Once a discrete vector field $V$ on $\Delta$ is given, a $V$-path is a sequence of simplices 
\[\alpha_0^{(q)},\beta_0^{(q+1)},\alpha_1^{(q)},\beta_1^{(q+1)},  \alpha_2^{(q)},\dots,\beta_h^{(q+1)},\alpha_{h+1}^{(q)}\]
such that for each $i=0,\dots,h$, $\{\alpha_i<\beta_i\}\in V$ and $\beta_i>\alpha_{i+1}\neq \alpha_i$. We say such a path is a non-trivial closed path if $h\geq 1$ and $\alpha_0=\alpha_{h+1}$.

A discrete Morse function on $\Delta$~\cite[Definition~2.1]{zbMATH02041483} is a function which, roughly speaking, assigns higher numbers to higher-dimensional simplices, with at most one exception, locally, at each simplex. Once a discrete Morse function $f$ is given, the discrete gradient vector field of $f$ is defined as a collection $W$ of pairs $\{\alpha^{(q)}<\beta^{(q+1)}\}$ of simplices, where $\{\alpha^{(q)}<\beta^{(q+1)}\}$ is in $W$ if and only if $f(\beta)\leq f(\alpha)$. A simplex is critical~\cite[Definition~2.3]{zbMATH02041483} if and only if it is not in $W$. 
\begin{theorem}[{\cite[Theorem~3.5]{zbMATH02041483}}]\label{non-trivial closed}
    A discrete vector field $V$ is the gradient vector field of a discrete Morse function if and only if there are no non-trivial closed $V$-paths.
\end{theorem}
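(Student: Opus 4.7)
The plan is to prove both directions via the \emph{modified Hasse diagram} $H_V$ of $\Delta$: its vertices are the simplices of $\Delta$, and for each covering relation $\alpha<\beta$ one draws an edge $\alpha\to\beta$ if $\{\alpha<\beta\}\in V$ and $\beta\to\alpha$ otherwise. A $V$-path $\alpha_0,\beta_0,\alpha_1,\dots,\beta_h,\alpha_{h+1}$ translates directly into an alternating up-down directed path in $H_V$, and a non-trivial closed $V$-path corresponds to a directed cycle of this alternating form.

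For the ``only if'' direction, suppose $V$ is the gradient vector field of a discrete Morse function $f$. I would show that $f$ is strictly decreasing along the subsequence $\alpha_0,\alpha_1,\dots,\alpha_{h+1}$ of any $V$-path. For each $\{\alpha_i<\beta_i\}\in V$ we have $f(\beta_i)\leq f(\alpha_i)$ by the gradient condition. For the next link, $\alpha_{i+1}<\beta_i$ with $\alpha_{i+1}\neq \alpha_i$; the pair $\{\alpha_{i+1}<\beta_i\}$ cannot lie in $V$ because $\beta_i$ is already used in the pair with $\alpha_i$, so $f(\alpha_{i+1})<f(\beta_i)\leq f(\alpha_i)$. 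Iterating yields $f(\alpha_0)>f(\alpha_1)>\cdots>f(\alpha_{h+1})$, ruling out any closed path with $\alpha_0=\alpha_{h+1}$.

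For the ``if'' direction, assume $V$ has no non-trivial closed paths. The first step is to show $H_V$ is acyclic. In $H_V$ the target of any upward edge $\alpha\to\beta$ has no outgoing upward edge, because $\beta$ is already locked into a single $V$-pair; so in any directed cycle each upward edge is followed immediately by a downward edge. Since a cycle has equal numbers of upward and downward edges (to return to the starting dimension) and every upward edge must be separated from the next upward one by at least one downward edge, the cycle is forced to alternate strictly up-down-up-down-\ldots. But any such alternating cycle exhibits a non-trivial closed $V$-path, contradicting the hypothesis. Once $H_V$ is acyclic, I would fix a linear extension $\sigma_1,\dots,\sigma_N$ (so every edge $\sigma_i\to\sigma_j$ satisfies $j<i$) and define $f(\sigma_i)\defeq i$. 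For each covering $\alpha<\beta$ this yields $f(\beta)<f(\alpha)$ exactly when $\{\alpha<\beta\}\in V$; since each simplex belongs to at most one pair of $V$, both local discrete Morse conditions hold at every simplex, and the gradient vector field of $f$ coincides with $V$.

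The main obstacle will be the acyclicity claim for $H_V$: it is the one step that actually consumes the no-closed-paths hypothesis, and the dimension-counting argument behind the forced alternation in a directed cycle must be carried out carefully (in particular, ruling out any run of two consecutive downward edges that is not compensated by extra upward edges). Once acyclicity is in hand, both the monotonicity check in the ``only if'' direction and the construction of $f$ from a linear extension amount to routine bookkeeping.
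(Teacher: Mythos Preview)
The paper does not give its own proof of this statement: it is quoted verbatim as background from Forman's user's guide \cite{zbMATH02041483}, with no argument supplied. So there is nothing in the paper to compare your proposal against.

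That said, your outline is essentially the standard proof and is correct. The ``only if'' direction is fine as written. For the ``if'' direction your acyclicity argument works, but you can tighten the step you flagged as the main obstacle: once you know that the head of every upward edge has only downward outgoing edges, a directed cycle with $k$ upward edges must contain at least $k$ downward edges occurring \emph{immediately} after them; since the total numbers of up and down edges in a cycle agree, these are \emph{all} the downward edges, and strict alternation follows without any further case analysis. A length-two cycle $\alpha\to\beta\to\alpha$ is impossible because it would force $\{\alpha<\beta\}$ to lie both in and out of $V$, so the resulting alternating cycle has $h\geq 1$ and yields the forbidden closed $V$-path. After that, your linear-extension construction of $f$ is the usual one; the fact that each simplex lies in at most one pair of $V$ is exactly what guarantees both local Morse conditions, and the gradient of $f$ recovers $V$ on the nose.
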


\begin{example}\label{dvf}
    Let $\Delta$ be a simplicial complex, and let $v$ be a vertex of $\Delta$. We will construct a gradient vector field of a discrete Morse function. Consider the following collection $V$ of pairs \(
\{\, \sigma < \sigma \cup \{v\} \,\},
\)
where $\sigma$ is a nonempty simplex, $v \notin \sigma$, and $\sigma \cup \{v\}$ is a simplex.  

To verify that $V$ is a discrete vector field on $\Delta$, it suffices to show that every nonempty simplex $\eta\in\Delta$ is in at most one pair of $V$. 

\textbf{Case 1:} $v \in \eta$  

If $v \in \eta$, then
\[
\begin{cases}
\eta = \{v\}, & \text{then } \eta \text{ is not in any pair of } V, \\
\{v\}<\eta, & \text{then } \eta \text{ is in the pair } \{\eta\setminus\{v\}<\eta\}.
\end{cases}
\]
\textbf{Case 2:} $v \notin \eta$  

If $v \notin \eta$, then
\[
\begin{cases}
\eta\cup\{v\} \text{ is a simplex }, & \text{then } \eta \text{ is in the pair } \{\eta<\eta\cup\{v\}\}, \\
\eta\cup\{v\} \text{ is not a simplex }, & \text{then } \eta \text{ is not in any pair of } V.
\end{cases}
\]

Suppose there is a non-trivial closed $V$-path 
\[\sigma_0, \sigma_0\cup\{v\},\sigma_1, \sigma_1\cup\{v\},\cdots,\sigma_h, \sigma_h\cup\{v\},\sigma_{h+1}.\]
Since $\sigma_1$ is not equal to $\sigma_0$, we must have $v\in\sigma_1$, which contradicts the fact that $\{\sigma_1, \sigma_1\cup\{v\}\}\in V$.

Therefore, there is no non-trivial closed $V$-path. By Theorem~\ref{non-trivial closed}, the collection $V$ is the gradient vector field of a discrete Morse function. The critical simplices are $\{v\}$ and $\{\sigma\in\Delta_\mathbf{b}\mid \sigma\cup \{v\}\notin \Delta_\mathbf{b}\}$ (the complement of the closed star of $\{v\}$).
\end{example}
We can now state the main theorem of discrete Morse theory.
\begin{theorem}[{\cite[Theorem~2.5]{zbMATH02041483}}]
    Suppose $\Delta$ is a simplicial complex with a discrete Morse function. Then $\Delta$ is homotopy equivalent to a CW complex with exactly one cell of dimension $q$ for each critical simplex of dimension $q$.
\end{theorem}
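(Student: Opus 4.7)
The plan is to follow Forman's original strategy based on sublevel subcomplexes, arguing by induction on the critical values of the Morse function $f$. For each real number $c$ define
\[
\Delta(c) \defeq \bigcup_{\alpha\in\Delta,\, f(\alpha)\leq c} \bar{\alpha},
\]
where $\bar{\alpha}$ denotes the closure of $\alpha$ in $\Delta$ (i.e., $\alpha$ together with all its faces). Since $\Delta$ has finitely many simplices, $\Delta(c)$ is constant on the open intervals between consecutive values of $f$, and varies from $\emptyset$ for $c\ll 0$ to all of $\Delta$ for $c\gg 0$. The strategy is to track the homotopy type of $\Delta(c)$ as $c$ increases and to show that, up to homotopy, $\Delta(c)$ is obtained from $\emptyset$ by successively attaching one cell of dimension $q$ for each critical simplex of dimension $q$.

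The argument splits into two lemmas. \emph{First}, if the interval $[a,b]$ contains no critical values of $f$, then the inclusion $\Delta(a)\hookrightarrow\Delta(b)$ is a homotopy equivalence. The key input here is that any non-critical simplex $\beta$ with $f(\beta)\in(a,b]$ is, by definition of $W$, matched with either a face $\alpha<\beta$ satisfying $f(\alpha)\geq f(\beta)$ or a coface $\gamma>\beta$ satisfying $f(\gamma)\leq f(\beta)$, so its matched partner lies in the same sublevel range. Each such matched pair then forms a free face situation that can be eliminated by an elementary simplicial collapse. \emph{Second}, if $[a,b]$ contains exactly one critical value, corresponding to a critical simplex $\sigma$ of dimension $q$, then every proper face $\rho<\sigma$ satisfies $f(\rho)<f(\sigma)$ (otherwise $\sigma$ would be matched downward and hence not critical), so $\partial\sigma\subset\Delta(a)$, and $\Delta(b)$ is homotopy equivalent to $\Delta(a)\cup_\phi e^q$ for the attaching map $\phi\colon S^{q-1}\to\Delta(a)$ induced by $\partial\sigma$.

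Combining these two lemmas by induction on the finite list of critical values of $f$ yields the conclusion: starting from the empty complex, each critical value of dimension $q$ attaches a $q$-cell, while the non-critical ranges preserve the homotopy type. After the last critical value one has built a CW complex homotopy equivalent to $\Delta$ whose cells are in bijection with the critical simplices, with dimensions preserved.

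The main obstacle is the first lemma, and specifically the task of turning the local pairing data of $V$ into a globally coherent sequence of elementary collapses. This is where Theorem~\ref{non-trivial closed} (the absence of non-trivial closed $V$-paths) enters crucially: on a plateau where $f$ takes a common value, several matched pairs may appear simultaneously, and collapsing them in the wrong order can destroy the free-face property for later pairs. Acyclicity of $V$ is exactly what lets one refine the $V$-path partial order to a linear order on matched pairs, so that each collapse really does remove a currently free face. Without this acyclicity the pairs would obstruct each other and the reduction to critical cells would break.
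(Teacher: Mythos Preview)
The paper does not prove this statement: it is quoted verbatim as background from Forman's article (cited as \cite[Theorem~2.5]{zbMATH02041483}) and no proof is given. There is therefore nothing in the paper to compare your proposal against.

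That said, your sketch is essentially Forman's own argument via sublevel complexes $\Delta(c)$, and the outline is correct. One technical point worth tightening if you ever write this out in full: in your second lemma you implicitly assume that the interval $[a,b]$ contains a single critical \emph{simplex}, not just a single critical \emph{value}; in general several critical simplices could share the same value of $f$. This is harmless because one can always perturb $f$ (or equivalently refine the ordering) so that all critical simplices have distinct values without changing the gradient vector field, but it should be said. Likewise, your first lemma as stated needs the observation that a discrete Morse function can be adjusted so that matched pairs $\{\alpha<\beta\}$ satisfy $f(\alpha)=f(\beta)$ and distinct pairs have distinct values; otherwise ``no critical values in $[a,b]$'' does not by itself guarantee that the simplices appearing in that range come in complete matched pairs. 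With those standard normalizations the collapse argument goes through exactly as you describe.
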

\begin{corollary}
    Let $\Delta$ be a simplicial complex, and let $v$ be a vertex of $\Delta$. We define $N_{v,q}$ to be the number of $q$-cells in $\{\sigma\in\Delta\mid \sigma\cup \{v\}\notin \Delta\}$. Then $\Delta$ is homotopy equivalent to a CW complex with exactly $(N_{v,0}+1)$ $0$-cells, and $N_{v,q}$ $q$-cells for $q\geq 1$.
\end{corollary}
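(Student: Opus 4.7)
The plan is to apply the main theorem of discrete Morse theory directly to the gradient vector field $V$ already constructed in Example~\ref{dvf}. That example does the substantive work: it exhibits a discrete vector field $V$ (in fact the gradient of a discrete Morse function) whose critical simplices are exactly $\{v\}$ together with the simplices in
\[
C \defeq \{\sigma \in \Delta \mid \sigma \cup \{v\} \notin \Delta\}.
\]
So all that remains is to count the critical simplices dimension by dimension and invoke the main theorem.

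First, I would observe that the two parts of the critical set are disjoint: $\{v\}$ does not lie in $C$, because $\{v\} \cup \{v\} = \{v\} \in \Delta$. Hence the critical simplices form the genuine disjoint union $\{v\} \sqcup C$. By the definition of $N_{v,q}$, the set $C$ contains exactly $N_{v,q}$ simplices of dimension $q$, so in dimension $0$ there are $N_{v,0}+1$ critical simplices (the extra one being $\{v\}$ itself), while in each dimension $q \geq 1$ there are exactly $N_{v,q}$ critical simplices.

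Applying the main theorem of discrete Morse theory to this data yields a CW complex homotopy equivalent to $\Delta$ with one $q$-cell for each critical $q$-simplex; substituting the counts above gives $N_{v,0}+1$ $0$-cells and $N_{v,q}$ $q$-cells for $q \geq 1$, as claimed. There is no serious obstacle here: the entire argument is bookkeeping once Example~\ref{dvf} and the preceding theorem are in hand. The only point requiring care is the off-by-one for the critical vertex $\{v\}$, which is addressed by the disjointness check above.
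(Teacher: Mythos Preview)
Your proof is correct and matches the paper's intended argument exactly: the corollary is stated immediately after Example~\ref{dvf} and the main theorem of discrete Morse theory precisely because it follows by the bookkeeping you describe, counting the critical simplices $\{v\} \sqcup \{\sigma \in \Delta \mid \sigma \cup \{v\} \notin \Delta\}$ dimension by dimension and applying the theorem. The disjointness check for the off-by-one at dimension $0$ is the only subtlety, and you handle it correctly.
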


\begin{theorem}[{\cite[Theorem~2.11]{zbMATH02041483}}]
   Let $\Delta$ be a simplicial complex with a discrete Morse function. Let $m_q$ denote the number of critical simplices of dimension $q$. Then
    \[m_q\geq \dim \tilde{H}_q(\Delta;k).\]
\end{theorem}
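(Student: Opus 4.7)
The plan is to deduce this inequality as a direct corollary of the previous theorem (Forman's Theorem~2.5 cited just above), which already does the heavy lifting by producing a CW complex homotopy equivalent to $\Delta$ whose $q$-cells are in bijection with the critical $q$-simplices. Once that topological statement is in hand, the Morse inequality is a purely algebraic observation about cellular chain complexes.

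First I would invoke Theorem~2.5 to replace $\Delta$ by a homotopy-equivalent CW complex $X$ that has exactly $m_q$ cells in each dimension $q$. Since reduced singular homology with $\mathbb{C}$-coefficients is a homotopy invariant, this gives $\tilde{H}_q(\Delta;\mathbb{C}) \cong \tilde{H}_q(X;\mathbb{C})$, so it suffices to bound the right-hand side.

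Next I would compute $\tilde{H}_q(X;\mathbb{C})$ via the cellular chain complex $C_\bullet^{\mathrm{cell}}(X;\mathbb{C})$. By construction, $C_q^{\mathrm{cell}}(X;\mathbb{C})$ is a $\mathbb{C}$-vector space with one basis element per $q$-cell, so $\dim_\mathbb{C} C_q^{\mathrm{cell}}(X;\mathbb{C}) = m_q$. Because $H_q(X;\mathbb{C})$ is a subquotient of $C_q^{\mathrm{cell}}(X;\mathbb{C})$, we get $\dim H_q(X;\mathbb{C}) \leq m_q$. For $q \geq 1$ reduced and unreduced homology coincide, so the inequality transfers directly. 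For $q = 0$, we use $\dim \tilde{H}_0 \leq \dim H_0 \leq m_0$ (in fact the sharper bound $\dim \tilde{H}_0 \leq m_0 - 1$ holds when $X$ is nonempty, but the weaker form is all that is claimed).

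I do not expect a genuine obstacle here, since the substantive content — the construction of the CW model — is already encapsulated in the previous theorem. The only point that deserves a sentence of care is the passage from unreduced to reduced homology in degree zero, which is immediate from the definition of $\tilde{H}_0$ as the kernel of the augmentation $H_0 \to \mathbb{C}$.
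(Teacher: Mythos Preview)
Your derivation is correct and is the standard way to obtain the weak Morse inequalities from the main theorem of discrete Morse theory. Note, however, that the paper does not supply its own proof of this statement: it is quoted as background from Forman's article (\cite[Theorem~2.11]{zbMATH02041483}) without argument, so there is nothing in the paper to compare your proof against. Your deduction from the preceding Theorem~2.5 is exactly how one would justify it if a proof were required.
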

By Hochster's formula, we obtain the following upper bounds for multigraded Betti numbers.
\begin{corollary}\label{Nvq}
   Let $\mathbf{b}\in\mathbb{N}\mathcal{A}$. Let $v$ be a vertex of $\Delta_\mathbf{b}$. Recall that $N_{v,q}$ denotes the number of $q$-cells in $\{\sigma\in\Delta_\mathbf{b}\mid \sigma\cup \{v\}\notin \Delta_\mathbf{b}\}$. We define $N_q\defeq \displaystyle\min_{v\in \operatorname{Vert}(\Delta_\mathbf{b})}N_{v,q}$. Then 
    \[N_{q}\geq \beta_{q+1,\mathbf{b}}.\]
\end{corollary}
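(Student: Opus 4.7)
The plan is to apply, for each vertex $v$ of $\Delta_\mathbf{b}$, the CW decomposition produced by the preceding corollary, together with Hochster's formula and the Morse inequality quoted above. First, I would fix an arbitrary $v \in \operatorname{Vert}(\Delta_\mathbf{b})$ and invoke the corollary just above: $\Delta_\mathbf{b}$ is homotopy equivalent to a CW complex with $N_{v,0}+1$ cells of dimension $0$ and $N_{v,q}$ cells of dimension $q$ for each $q \geq 1$. This packages the critical-simplex count of the Morse function from Example~\ref{dvf} into an actual CW structure on a space homotopy equivalent to $\Delta_\mathbf{b}$.

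For $q \geq 1$, the Morse inequality applied to this CW structure directly gives $N_{v,q} \geq \dim \tilde{H}_q(\Delta_\mathbf{b};\mathbb{C})$, and by Hochster's formula the right-hand side equals $\beta_{q+1,\mathbf{b}}$. For $q = 0$, a naive application of the reduced-homology Morse inequality only yields $N_{v,0}+1 \geq \beta_{1,\mathbf{b}}$, leaving the bound off by one. To recover the sharp statement I would use the unreduced fact that the number of $0$-cells of any CW complex is at least the number of its connected components, i.e., $N_{v,0}+1 \geq \dim H_0(\Delta_\mathbf{b};\mathbb{C}) = \dim \tilde{H}_0(\Delta_\mathbf{b};\mathbb{C}) + 1 = \beta_{1,\mathbf{b}} + 1$, which rearranges to $N_{v,0} \geq \beta_{1,\mathbf{b}}$.

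In every case one obtains $N_{v,q} \geq \beta_{q+1,\mathbf{b}}$; taking the minimum over $v \in \operatorname{Vert}(\Delta_\mathbf{b})$ then yields $N_q \geq \beta_{q+1,\mathbf{b}}$. The whole argument is essentially bookkeeping once the vector field from Example~\ref{dvf} is in hand; the only genuinely subtle point is the $q=0$ case, where one must pass from reduced to unreduced $H_0$ (equivalently, observe that each connected component of $\Delta_\mathbf{b}$ necessarily contributes its own critical $0$-cell).
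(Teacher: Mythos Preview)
Your proof is correct and follows the same route the paper intends: apply the discrete Morse function of Example~\ref{dvf}, count critical simplices, and combine the resulting Morse inequalities with Hochster's formula, then minimize over $v$. The paper leaves this corollary without an explicit proof, so your careful handling of the $q=0$ case---passing to unreduced $H_0$ to absorb the extra critical $0$-cell $\{v\}$---actually fills a small gap that the paper glosses over.
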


\section{VANISHING THEOREM 1}\label{VANISHING THEOREM 1}
The goal of this section is to prove Theorem~\ref{vanishing1}.
\begin{theorem}\label{A_j}
    Let $j$ be a positive integer, and let $|\mathbf{b}|=dj$. If $\mathbf{b}_0\geq A_j\defeq \displaystyle\sum_{i\in\{1,...,j\}\cap \{1,...,n\}}\mathbf{a}^i_0$, then $\Delta_\mathbf{b}$ is a cone over $(d,0,0)$. 
\end{theorem}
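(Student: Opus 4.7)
The plan is to show directly that $\Delta_\mathbf{b}$ is a cone over the vertex $\{1\}$, which corresponds to $\mathbf{a}^1 = (d,0,0)$ (this is the lex-maximum element of $\mathcal{A}$, and in fact the unique one with first coordinate equal to $d$). Concretely, I would verify that for every face $I \in \Delta_\mathbf{b}$ the set $I \cup \{1\}$ is again a face. The subcase $1 \in I$ is trivial, so assume $I \subseteq \{2,\ldots,n\}$. Since $\mathbf{a}^1$ is supported in coordinate $0$ alone, adding it to $I$ does not change the residual in coordinates $1,2$, which is already non-negative because $I \in \Delta_\mathbf{b}$. Hence the only remaining inequality to check is
\[
\mathbf{b}_0 \;\geq\; d + \sum_{i \in I} \mathbf{a}^i_0.
\]

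The two ingredients I would use are: (i) the lex order on $\mathcal{A}$ implies $\mathbf{a}^1_0 \geq \mathbf{a}^2_0 \geq \cdots \geq \mathbf{a}^n_0$, with $\mathbf{a}^i_0 \leq d-1$ for all $i \geq 2$; and (ii) $|I| \leq j$, which follows from $|\mathbf{b}| = dj$ and the face condition $\mathbf{b} - \sum_{i \in I}\mathbf{a}^i \geq \mathbf{0}$. In particular, since $I \subseteq \{2,\dots,n\}$, the largest possible value of $\sum_{i \in I}\mathbf{a}^i_0$ is obtained by taking the lex-earliest indices, giving $\sum_{i \in I}\mathbf{a}^i_0 \leq \sum_{i=2}^{|I|+1}\mathbf{a}^i_0$ (with the sum truncated at $n$ if necessary).

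I would then split on $|I|$. If $|I| \leq j-1$, then $\sum_{i \in I}\mathbf{a}^i_0 \leq \sum_{i=2}^{j}\mathbf{a}^i_0 = A_j - d$, and the hypothesis $\mathbf{b}_0 \geq A_j$ delivers the required inequality immediately. The remaining case $|I| = j$ is the delicate one, because a naive application of the lex bound would only yield the weaker threshold $A_{j+1}$. Here I would observe that $|I| = j$ combined with $|\sum_{i \in I}\mathbf{a}^i| = dj = |\mathbf{b}|$ and componentwise non-negativity of the difference forces equality $\sum_{i \in I}\mathbf{a}^i = \mathbf{b}$. Taking the $0$-th coordinate gives $\mathbf{b}_0 = \sum_{i \in I}\mathbf{a}^i_0 \leq \sum_{i=2}^{j+1}\mathbf{a}^i_0 = A_{j+1} - d$; since $\mathbf{a}^{j+1}_0 \leq d-1$ we conclude $A_{j+1} - d < A_j$, contradicting the hypothesis. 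Hence this case cannot arise.

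The only real obstacle is this edge case $|I|=j$: it is precisely the equality-forcing argument above (degrees add up to exactly $|\mathbf{b}|$) that sharpens the threshold from $A_{j+1}$ down to the claimed $A_j$. Everything else is bookkeeping with the lex order and the degree constraint.
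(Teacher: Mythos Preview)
Your proposal is correct and follows essentially the same route as the paper: reduce to checking the $0$-th coordinate, bound $\sum_{i\in I}\mathbf{a}^i_0$ using the lex order, handle $|I|\leq j-1$ directly, and rule out $|I|=j$ by a contradiction. The only cosmetic difference is that in the $|I|=j$ case the paper first uses the hypothesis to get $\mathbf{b}_0>\sum_{i\in\sigma}\mathbf{a}^i_0$ and then contradicts nonnegativity in coordinates $1,2$, whereas you first force $\mathbf{b}=\sum_{i\in I}\mathbf{a}^i$ and then contradict the hypothesis via $\mathbf{b}_0\leq A_{j+1}-d<A_j$; these are the same computation read in opposite directions.
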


\begin{proof}
Suppose $\sigma\in\Delta_\mathbf{b}$, then we have  $\mathbf{b}-\displaystyle\sum_{i\in \sigma}\mathbf{a}^i\in\mathbb{N}^3$. We want to show $\sigma\cup \{1\}\in \Delta_\mathbf{b}$. If $1\in \sigma$, we are done. Now we assume $1\notin \sigma$. It suffices to show \begin{equation*}
    \mathbf{b}_0-d-\displaystyle\sum_{i\in \sigma}\mathbf{a}^i_0\geq 0.\tag{*}
\end{equation*}
We distinguish two cases. 
\begin{itemize}
    \item  $1\leq j\leq n-1$. 
    
We first show that $|\sigma|\leq j-1$. 

We prove this by contradiction. Assume this is not the case, i.e. $|\sigma|\geq j$. Note that $\mathbf{b}-\displaystyle\sum_{i\in \sigma}\mathbf{a}^i\in\mathbb{N}^3$ implies $|\sigma|\leq j$. This means $|\sigma|=j$, and then we have 
\[\mathbf{b}_0\geq \sum_{i=1}^j\mathbf{a}^i_0>\sum_{i=2}^{j+1}\mathbf{a}^i_0\geq \sum_{i\in\sigma}\mathbf{a}^i_0.\] Hence $dj-\mathbf{b}_0<dj-\displaystyle\sum_{i\in\sigma}\mathbf{a}^i_0$. Since $|\sigma|=j$, we must have $dj=|\displaystyle\sum_{i\in\sigma}\mathbf{a}^i|$. Thus, we obtain $\mathbf{b}_1+\mathbf{b}_2<\displaystyle\sum_{i\in\sigma}\mathbf{a}^i_1+\displaystyle\sum_{i\in\sigma}\mathbf{a}^i_2$. In addition, $\mathbf{b}-\displaystyle\sum_{i\in \sigma}\mathbf{a}^i\in\mathbb{N}^3$ implies $\mathbf{b}_1\geq \sum_{i\in \sigma}\mathbf{a}^i_1$ and $\mathbf{b}_2\geq \sum_{i\in \sigma}\mathbf{a}^i_2$, then $\mathbf{b}_1+\mathbf{b}_2\geq\displaystyle\sum_{i\in\sigma}\mathbf{a}^i_1+\displaystyle\sum_{i\in\sigma}\mathbf{a}^i_2$, which is a contradiction. 

Since $|\sigma|\leq j-1$, we know $\displaystyle\sum_{i=2}^j\mathbf{a}^i_0\geq \displaystyle\sum_{i\in\sigma}\mathbf{a}^i_0$.
By assumption, $\mathbf{b}_0\geq \displaystyle\sum_{i=1}^j\mathbf{a}^i_0$. Thus, we have 
\[\mathbf{b}_0-d\geq  \displaystyle\sum_{i=1}^j\mathbf{a}^i_0-d= \displaystyle\sum_{i=2}^j\mathbf{a}^i_0\geq \displaystyle\sum_{i\in\sigma}\mathbf{a}^i_0.\]
Therefore, \text{(*)} is true.

\item  $j\geq n$. 

By assumption, $\mathbf{b}_0\geq \displaystyle\sum_{i=1}^n\mathbf{a}^i_0$. Then we have 
\[\mathbf{b}_0-d\geq \sum_{i=2}^n\mathbf{a}^i_0\geq \sum_{i\in\sigma}\mathbf{a}^i_0.\]

\end{itemize}
\end{proof}

We can explicitly write the expression for $A_j$ as follows:
\[A_j\defeq\displaystyle\sum_{i\in\{1,...,j\}\cap \{1,...,n\}}\mathbf{a}^i_0=\begin{cases}
        (d-k)j+\binom{k+2}{3} & \text{if}\quad j\in [\binom{k+1}{2},\binom{k+2}{2}]\quad \text{for some}\quad 1\leq k \leq d,\\
    \binom{d+2}{3} &\text{if}\quad j\geq n.
 
    \end{cases}\]

\section{VANISHING THEOREM 2}\label{VANISHING THEOREM 2}
The goal of this section is to prove Theorem~\ref{vanishing2}.
\begin{lemma}\label{monotonicity}
  Let $A$ be the integer $\displaystyle\sum_{i=1}^n\mathbf{a}^i_0$. For each $0\leq l \leq A$, define \[C_l \defeq\left\{ \sigma \subset \{1, \dots, n\} \;\middle|\; \displaystyle\sum_{i \in \sigma} \mathbf{a}_0^i = l \right\}\ \text{and}\ f_l\defeq\displaystyle\max_{\sigma \in C_l} \sum_{i \in \sigma} \mathbf{a}_1^i.\] Then $C_l\neq \emptyset$, and $f_{l+1}\geq f_l$ for each $0\leq l\leq A-1$. 
\end{lemma}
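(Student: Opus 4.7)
Both conclusions will follow from a single strengthened claim: whenever $\sigma\subset\{1,\dots,n\}$ satisfies $\sum_{i\in\sigma}\mathbf{a}_0^i=l<A$, there exists $\sigma'\subset\{1,\dots,n\}$ with $\sum_{i\in\sigma'}\mathbf{a}_0^i=l+1$ and $\sum_{i\in\sigma'}\mathbf{a}_1^i\geq\sum_{i\in\sigma}\mathbf{a}_1^i$. Once this strengthening is in hand, non-emptiness of each $C_l$ will follow by induction starting from $\emptyset\in C_0$, and monotonicity $f_{l+1}\geq f_l$ will follow by applying it to any $\sigma$ attaining the maximum that defines $f_l$.

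The plan for the strengthening is a one-step swap/add move chosen via a case split on whether $\sigma$ already contains every element of ``row $1$,'' i.e., every $(1,k,d-1-k)$ for $0\leq k\leq d-1$. If some row-$1$ element $j$ lies outside $\sigma$, then I simply set $\sigma'=\sigma\cup\{j\}$: this raises the first-coordinate sum by exactly $1$ and adds $\mathbf{a}_1^j\geq 0$ to the second-coordinate sum. Otherwise, row $1$ lies entirely inside $\sigma$, and I let $a^*\geq 2$ be the smallest row index for which some element of row $a^*$ is missing from $\sigma$; such $a^*$ exists, because otherwise all of rows $1,\dots,d$ would lie in $\sigma$, forcing the first-coordinate sum to equal $A$, against $l<A$. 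I then swap the ``bottom'' element $(a^*-1,0,d-a^*+1)$, which belongs to $\sigma$ because row $a^*-1$ does, for any missing element $(a^*,b^*,d-a^*-b^*)\notin\sigma$. This produces a first-coordinate change of $-(a^*-1)+a^*=+1$ and a second-coordinate change of $-0+b^*=b^*\geq 0$, as required.

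The conceptual crux, and the only nontrivial design choice, is the swap in the second case: by always evicting an element with $\mathbf{a}_1=0$ and inserting one with $\mathbf{a}_1\geq 0$, the second-coordinate sum is kept from dropping. I expect the $m>2$ generalization mentioned in Section~\ref{general} to run on the same template, with ``rows'' indexed by the first $m-1$ coordinates and evicted elements chosen to have $\mathbf{a}_1=0$; the main bookkeeping obstacle there will be to fix a layered ordering of rows so that the first missing element of $\sigma$ always sits in a row adjacent to one that is entirely contained in $\sigma$.
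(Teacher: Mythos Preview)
Your proof is correct and follows the same swap-move template as the paper: produce $\sigma'$ from $\sigma$ by a single add/swap that raises the first-coordinate sum by one while evicting only an element with $\mathbf{a}_1=0$. The paper's version differs in one detail: it works only with a \emph{maximizer} $\sigma\in C_l$ and swaps along the edge $\{(k,0,d-k):1\le k\le d\}$, using maximality of $\sigma$ to argue that this edge cannot be fully contained in $\sigma$ (otherwise one could swap some $(\alpha,0,d-\alpha)$ for a missing $(\alpha,\beta,\gamma)$ with $\beta>0$ and contradict maximality). Your row-by-row search locates the swap without invoking maximality, so your strengthened claim holds for arbitrary $\sigma$; this is a mild streamlining and makes the induction for non-emptiness start cleanly at $\emptyset\in C_0$, but the underlying mechanism is the same.
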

\begin{proof}
\textbf{Base case}: The set $C_0$ is nonempty because $\{n\}$ is in $C_0$.

\textbf{Inductive hypothesis}: Assume $C_l$ is nonempty. We aim to show that $C_{l+1}$ is nonempty.

To begin, choose an element $\sigma\in C_l$ such that $\displaystyle\sum_{i\in\sigma}\mathbf{a}^i_1=f_l$.

\begin{claim*}
    $\{(1,0,d-1),(2,0,d-2),\ldots,(d,0,0)\}$ is not a subset of $\sigma$.
\end{claim*}
\begin{proof}[Proof of Claim]
  We prove this by contradiction, assuming that it is a subset of $\sigma$. 
  
  Define $I\defeq\{(\alpha,\beta,\gamma) \in \mathbb{N}^3\mid \alpha\neq 0, \beta\neq 0, \alpha+\beta+\gamma=d\}$. If there is an $(\alpha,\beta,\gamma)$ in $I$ such that $(\alpha,\beta,\gamma)\notin \sigma$, then \[\displaystyle\sum_{i\in (\sigma\setminus \{(\alpha,0,d-\alpha)\})\cup \{(\alpha,\beta,\gamma)\}}\mathbf{a}^i_1>\sum_{i\in\sigma}\mathbf{a}^i_1=f_l,\] where $(\sigma\setminus \{(\alpha,0,d-\alpha)\})\cup \{(\alpha,\beta,\gamma)\}$ is in $C_l$. This is impossible since $f_l$ is the maximum. Hence $I\subset \sigma$.

  Then \[A-1\geq l=\sum_{i\in\sigma}\mathbf{a}^i_0= \sum_{i\in I}\mathbf{a}^i_0+ 1+2+\cdots+d=A,\] which is a contradiction. We must have $\{(1,0,d-1),(2,0,d-2),\ldots,(d,0,0)\}\not\subset \sigma$. 
\end{proof}

    Let $(k,0,d-k)$ be the first element in $\{(1,0,d-1),(2,0,d-2),\ldots,(d,0,0)\}$ such that $(k,0,d-k)\notin \sigma$. Then $(\sigma\setminus \{(k-1,0,d-k+1)\})\cup \{(k,0,d-k)\}\in C_{l+1}$. Hence $C_{l+1}\neq \emptyset$. And
     \[f_{l+1}\geq \sum_{i\in (\sigma\setminus \{(k-1,0,d-k+1)\})\cup \{(k,0,d-k)\}}\mathbf{a}^i_1=\sum_{i\in \sigma}\mathbf{a}^i_1=f_l.\]
\end{proof}

\begin{remark}
    By symmetry, we can also define $f_l$ using the third coordinate, that is, $f_l\defeq\displaystyle\max_{\sigma \in C_l} \sum_{i \in \sigma} \mathbf{a}_2^i$.
\end{remark}
 Assume $j\geq d+1$. By Lemma \ref{monotonicity}, we know that $\phi(l)\defeq \lceil \frac{dj-l}{2} \rceil-f_l$ is a decreasing function on $\{0,...,A_j-1\}$. Let $\tilde{l}_j$ be the largest integer $l$ in $\{0,...,A_j-1\}$ such that $\phi(l)\geq 0$. 
 \begin{theorem}\label{l}
Let $j$ be a positive integer such that $j\geq d+1$, and let $|\mathbf{b}|=dj$. If $\mathbf{b}_0\leq \tilde{l}_j$, then $\Delta_\mathbf{b}$ is a cone over $(0,d,0)$ or $(0,0,d)$. 
 \end{theorem}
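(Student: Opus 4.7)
The plan is to show, assuming without loss of generality that $\mathbf{b}_2\ge\mathbf{b}_1$, that $\Delta_\mathbf{b}$ is a cone over $(0,0,d)$; the case $\mathbf{b}_1\ge\mathbf{b}_2$ then follows by interchanging the second and third coordinates. This interchange is a symmetry of the whole setup: it sends $\Delta_{(\mathbf{b}_0,\mathbf{b}_1,\mathbf{b}_2)}$ to $\Delta_{(\mathbf{b}_0,\mathbf{b}_2,\mathbf{b}_1)}$, swaps $(0,d,0)\leftrightarrow(0,0,d)$, and preserves the hypothesis (which only depends on $\mathbf{b}_0$ and $|\mathbf{b}|$). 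Throughout I use the third-coordinate version of $f_l$ sanctioned by the Remark after Lemma~\ref{monotonicity}; by the same symmetry this agrees with the second-coordinate version, so it is consistent with the given definitions of $\phi$ and $\tilde{l}_j$.

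First I would extract the bound $\mathbf{b}_2\ge f_{\mathbf{b}_0}$. From $\mathbf{b}_1+\mathbf{b}_2=dj-\mathbf{b}_0$ and $\mathbf{b}_2\ge\mathbf{b}_1$, integrality gives $\mathbf{b}_2\ge\lceil(dj-\mathbf{b}_0)/2\rceil$. Since $\phi$ is decreasing on $\{0,\dots,A_j-1\}$ and $\mathbf{b}_0\le\tilde{l}_j$, I get $\phi(\mathbf{b}_0)\ge\phi(\tilde{l}_j)\ge 0$, that is $\lceil(dj-\mathbf{b}_0)/2\rceil\ge f_{\mathbf{b}_0}$. Chaining the two gives $\mathbf{b}_2\ge f_{\mathbf{b}_0}$.

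Next I would verify the cone property. Take any $\sigma\in\Delta_\mathbf{b}$ with $(0,0,d)\notin\sigma$; I need $\sigma\cup\{(0,0,d)\}\in\Delta_\mathbf{b}$. Since $(0,0,d)$ has zero first and second coordinates, the first two coordinate constraints in the definition of $\Delta_\mathbf{b}$ are automatically preserved, and it suffices to check $\mathbf{b}_2-\sum_{i\in\sigma}\mathbf{a}^i_2\ge d$. Let $l'\defeq\sum_{i\in\sigma}\mathbf{a}^i_0\le\mathbf{b}_0$. Because $(0,0,d)$ contributes nothing to the first coordinate, the set $\sigma\cup\{(0,0,d)\}$ still lies in $C_{l'}$, so by the definition of $f_{l'}$,
\[
\sum_{i\in\sigma}\mathbf{a}^i_2+d\;=\;\sum_{i\in\sigma\cup\{(0,0,d)\}}\mathbf{a}^i_2\;\le\;f_{l'}.
\]
The monotonicity in Lemma~\ref{monotonicity} gives $f_{l'}\le f_{\mathbf{b}_0}\le\mathbf{b}_2$, and the desired inequality follows.

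The argument is a short chain of the inequalities already developed, so I do not anticipate a substantive obstacle. The one point that deserves care is matching up the two permitted versions of $f_l$ so that the monotonicity lemma and the definition of $\tilde{l}_j$ refer to the same quantity. As a sanity check, the $\sigma=\emptyset$ case of step two implicitly confirms that $(0,0,d)$ is a vertex of $\Delta_\mathbf{b}$ (since $\{(0,0,d)\}\in C_0$ forces $f_0\ge d$, hence $\mathbf{b}_2\ge d$), so that the phrase "cone over $(0,0,d)$" is meaningful; the hypothesis $j\ge d+1$ enters only to guarantee that $\phi$ is defined and decreasing on $\{0,\dots,A_j-1\}$, which is exactly what justifies the definition of $\tilde{l}_j$.
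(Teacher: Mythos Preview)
Your argument is correct and follows essentially the same route as the paper's own proof: use $\mathbf{b}_2\ge\lceil(dj-\mathbf{b}_0)/2\rceil$ (respectively $\mathbf{b}_1$), combine with $\phi\ge 0$ at the relevant point, and then bound $\sum_{i\in\sigma\cup\{(0,0,d)\}}\mathbf{a}^i_2$ by $f_{l'}$ and use monotonicity of $f$. The only cosmetic difference is that you route the inequality chain through $f_{\mathbf{b}_0}$ while the paper routes it through $f_{\tilde{l}_j}$; both work, and your observation that the second- and third-coordinate versions of $f_l$ coincide by the coordinate swap is correct.
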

\begin{proof}
   If $\mathbf{b}_0\leq \tilde{l}_j$, then \[\lceil \frac{\mathbf{b}_1+\mathbf{b}_2}{2}\rceil=\lceil \frac{dj-\mathbf{b}_0}{2}\rceil\geq\lceil \frac{dj-\tilde{l}_j}{2}\rceil.\] Note either $\mathbf{b}_1$ or $\mathbf{b}_2\geq \lceil \frac{\mathbf{b}_1+\mathbf{b}_2}{2}\rceil$. If $\mathbf{b}_1\geq \lceil \frac{\mathbf{b}_1+\mathbf{b}_2}{2}\rceil$, then we want to show that $\Delta_\mathbf{b}$ is a cone over $(0,d,0)$. 
    
    Let $\sigma\in \Delta_\mathbf{b}$, we want to show $\sigma\cup \{(0,d,0)\}\in \Delta_\mathbf{b}$. If $(0,d,0)\in \sigma$, we are done. If $(0,d,0)\notin \sigma$, then it suffices to show $\mathbf{b}_1-\displaystyle\sum_{i\in\sigma}\mathbf{a}^i_1-d\geq 0$. Define $l'\defeq\displaystyle\sum_{i\in \sigma\cup \{(0,d,0)\}}\mathbf{a}^i_0$. Then \[l'=\sum_{i\in \sigma\cup \{(0,d,0)\}}\mathbf{a}^i_0=\sum_{i\in\sigma}\mathbf{a}^i_0\leq \mathbf{b}_0\leq \tilde{l}_j.\]
    Therefore, we have the following 
    \[\mathbf{b}_1\geq \lceil \frac{\mathbf{b}_1+\mathbf{b}_2}{2}\rceil \geq \lceil \frac{dj-\tilde{l}_j}{2}\rceil \geq f_{\tilde{l}_j}\geq f_{l'}\geq \sum_{i\in \sigma\cup \{(0,d,0)\}}\mathbf{a}^i_1=d+\sum_{i\in\sigma}\mathbf{a}^i_1.\]
    Similarly, if $\mathbf{b}_2\geq \lceil \frac{\mathbf{b}_1+\mathbf{b}_2}{2}\rceil$, then $\Delta_\mathbf{b}$ is a cone over $(0,0,d)$.
\end{proof}
Next, we present tables showing the two bounds for cases $d=2,3$, and $4$.

$d=2$
\[
\begin{array}{|c|c|c|c|c|}
\hline
j &3 &4 &5 & j\geq 6 \\ \hline
\tilde{l}_j & 0& 1& 3&3\\ \hline
A_j  & 4&4 & 4&4\\ \hline
\end{array}
\]

$d=3$
    \[
\begin{array}{|c|c|c|c|c|c|c|c|}
\hline
j & 4 & 5 & 6&7 &8 &9 & j\geq 10 \\ \hline
\tilde{l}_j & 0&0 &1 & 3&5 &8 &9 \\ \hline
A_j & 8 & 9&10 & 10&10 &10 & 10\\ \hline
\end{array}
\]

$d=4$
\[
\begin{array}{|c|c|c|c|c|c|c|c|c|c|c|c|}
\hline
j &5 &6 &7 &8 & 9& 10& 11& 12&13 &14 &j\geq15 \\ \hline
\tilde{l}_j &0 & 0& 1&2 & 3& 5& 7&9 & 13& 17& 19\\ \hline
A_j &14 &16 &17 & 18&19 &20 &20 &20 &20 &20 &20 \\ \hline
\end{array}
\]

\section{NON-VANISHING RESULTS}\label{NON-VANISHING RESULTS}
The goal of this section is to prove Theorem~\ref{nonvanishing D}.

\begin{lemma}\label{criticalsimplex}
Let $2\leq p\leq \binom{d+1}{2}-1$, and let $r$ be the unique integer such that $p+1\in I_r\defeq \Big( \binom{r}{2},\, \binom{r+1}{2} \Big]$. Suppose $|\mathbf{b}|=d(p+1)$ and $\mathbf{b}_0=\displaystyle\sum_{i=1}^{p+1}\mathbf{a}^i_0-1$. We define two sets \[D\defeq \Big\{I\in \binom{\{\binom{r}{2}+1,\dots,\binom{r+1}{2}\}}{p+1-\binom{r}{2}}\mid \mathbf{b}_1-\displaystyle\sum_{i=1}^{\binom{r}{2}}\mathbf{a}^i_1-1\leq \displaystyle\sum_{i\in I}\mathbf{a}^i_1 \leq \mathbf{b}_1-\displaystyle\sum_{i=1}^{\binom{r}{2}}\mathbf{a}^i_1 \Big\}\] \[D'\defeq \Big\{\{1,\dots, \binom{r}{2}\}\cup I\mid I\in D \Big\}.\]

Then \[\{\sigma\in\Delta_\mathbf{b}\mid \sigma\cup \{1\}\notin \Delta_\mathbf{b}\}=\{\tau\setminus \{1\}\mid \tau\in D'\}.\]
\end{lemma}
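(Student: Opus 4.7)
The plan is to characterize the simplices $\sigma \in \Delta_{\mathbf{b}}$ with $\sigma \cup \{1\} \notin \Delta_{\mathbf{b}}$ by pinning down $|\sigma|$ and the intersection of $\sigma$ with each ``block'' of indices sharing the same first coordinate, then translating the remaining coordinate inequalities into the definition of $D$. Since $\mathbf{a}^1 = (d,0,0)$, any such $\sigma$ must satisfy $1 \notin \sigma$ and the single inequality $\sum_{i \in \sigma} \mathbf{a}^i_0 > \mathbf{b}_0 - d$, i.e.\ $\sum_{i \in \sigma} \mathbf{a}^i_0 \geq A_{p+1} - d$. Combined with $\sigma \in \Delta_{\mathbf{b}}$, which forces $\sum_{i \in \sigma} \mathbf{a}^i_0 \leq \mathbf{b}_0 = A_{p+1} - 1$, this traps the first-coordinate sum in a very short window.

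The next step is to show $|\sigma| = p$. The upper bound $|\sigma| \leq p+1$ is immediate from $d|\sigma| \leq |\mathbf{b}| = d(p+1)$. The lower bound $|\sigma| \geq p$ follows because the maximum of $\sum \mathbf{a}^i_0$ over size-$k$ subsets of $\{2,\ldots,n\}$ is $\sum_{i=2}^{k+1}\mathbf{a}^i_0$, which must exceed $\sum_{i=2}^{p+1}\mathbf{a}^i_0 = A_{p+1} - d$, forcing $k \geq p$ by weak monotonicity of $i \mapsto \mathbf{a}^i_0$. The case $|\sigma| = p+1$ would give $\mathbf{b} = \sum_{i \in \sigma} \mathbf{a}^i$ and hence $\sum \mathbf{a}^i_0 = A_{p+1} - 1$, but the maximum over $|\sigma| = p+1$, $1 \notin \sigma$, is $A_{p+1} - d + \mathbf{a}^{p+2}_0$, and the hypothesis $p \geq 2$ gives $p+2 \geq 4$ and consequently $\mathbf{a}^{p+2}_0 \leq d-2$, contradicting $\sum \mathbf{a}^i_0 = A_{p+1} - 1$.

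With $|\sigma| = p$ fixed, $\sum \mathbf{a}^i_0$ is forced to equal its maximum value $A_{p+1} - d$. The indices $\{2,\ldots,n\}$ partition into blocks of constant first coordinate: indices $\binom{k+1}{2}+1, \ldots, \binom{k+2}{2}$ all have $\mathbf{a}^i_0 = d-k$. By the definition of $r$, the maximum is attained precisely when $\sigma$ contains all of the full blocks for $k = 1, \ldots, r-2$, i.e.\ all of $\{2, \ldots, \binom{r}{2}\}$, together with exactly $p+1 - \binom{r}{2}$ indices from the block $k = r-1$, i.e.\ a subset $I \subset \{\binom{r}{2}+1, \ldots, \binom{r+1}{2}\}$. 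The remaining membership conditions $\mathbf{b}_1 \geq \sum \mathbf{a}^i_1$ and $\mathbf{b}_2 \geq \sum \mathbf{a}^i_2$ then translate into the two bounds defining $D$: using $\mathbf{a}^1_1 = 0$, the first gives $\sum_{i \in I} \mathbf{a}^i_1 \leq \mathbf{b}_1 - \sum_{i=1}^{\binom{r}{2}}\mathbf{a}^i_1$, while the second, through $\sum \mathbf{a}^i_2 = dp - \sum \mathbf{a}^i_0 - \sum \mathbf{a}^i_1$ together with the now-known values of $\sum \mathbf{a}^i_0$ and $\mathbf{b}_2$, simplifies to $\sum \mathbf{a}^i_1 \geq \mathbf{b}_1 - 1$, i.e.\ $\sum_{i \in I} \mathbf{a}^i_1 \geq \mathbf{b}_1 - \sum_{i=1}^{\binom{r}{2}}\mathbf{a}^i_1 - 1$.

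The main obstacle is the cardinality step: ruling out $|\sigma| = p+1$ hinges delicately on the hypothesis $p \geq 2$ (through $\mathbf{a}^{p+2}_0 \leq d-2$), and without it the structural description of $\sigma$ would collapse. Once $|\sigma| = p$ is established, the rest of the argument is bookkeeping with the block structure and the three coordinate inequalities.
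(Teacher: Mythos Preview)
Your proposal is correct and follows essentially the same route as the paper's proof: both pin down $|\sigma|=p$ via the bounds on $\sum_{i\in\sigma}\mathbf{a}^i_0$, then force $\{2,\dots,\binom{r}{2}\}\subset\sigma$ with the remainder in the block $\{\binom{r}{2}+1,\dots,\binom{r+1}{2}\}$, and finally translate the $\mathbf{b}_1,\mathbf{b}_2$ inequalities into the defining conditions of $D$. Two small points to tighten: your ``weak monotonicity'' step for $|\sigma|\geq p$ silently uses $\mathbf{a}^{p+1}_0\geq 1$ (which holds since $p+1\leq\binom{d+1}{2}$, but should be said); and the reverse inclusion $\{\tau\setminus\{1\}:\tau\in D'\}\subseteq\{\sigma:\sigma\cup\{1\}\notin\Delta_{\mathbf{b}}\}$ deserves one explicit sentence verifying $\tau\setminus\{1\}\in\Delta_{\mathbf{b}}$, as the paper does.
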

\begin{proof}
    Let $\tau\in D'$. We aim to verify that $\tau\setminus \{1\}\in \Delta_\mathbf{b}$, but $\tau\notin \Delta_\mathbf{b}$.

To show that $\tau\setminus \{1\}\in \Delta_\mathbf{b}$, it suffices to show that $\mathbf{b}-\displaystyle\sum_{i\in\tau\setminus\{1\}}\mathbf{a}^i\in\mathbb{N}^3$. We check this on each component. Note that $\mathbf{b}_0-\displaystyle\sum_{i\in\tau\setminus \{1\}}\mathbf{a}^i_0=\mathbf{b}_0-(\displaystyle\sum_{i\in\tau}\mathbf{a}^i_0-d)=d-1>0$. And
\[\mathbf{b}_2-\displaystyle\sum_{i\in\tau\setminus \{1\}}\mathbf{a}^i_2=\mathbf{b}_2-\displaystyle\sum_{i\in\tau}\mathbf{a}^i_2=\displaystyle\sum_{i\in\tau}\mathbf{a}^i_0-\mathbf{b}_0+\sum_{i\in\tau}\mathbf{a}^i_1-\mathbf{b}_1=\begin{cases}
    0 &\text{if   } \mathbf{b}_1-\displaystyle\sum_{i\in\tau}\mathbf{a}^i_1=1\\
    1 &\text{if   }\mathbf{b}_1-\displaystyle\sum_{i\in\tau}\mathbf{a}^i_1=0. 
\end{cases}\]
To show that $\tau\notin \Delta_\mathbf{b}$, it suffices to show that $\mathbf{b}_0-\displaystyle\sum_{i\in\tau}\mathbf{a}^i_0<0$.

Note that $\mathbf{b}_0-\displaystyle\sum_{i\in\tau}\mathbf{a}^i_0=\mathbf{b}_0-\displaystyle\sum_{i=1}^{p+1}\mathbf{a}^i_0=-1<0$. Thus, $\{\sigma\in\Delta_\mathbf{b}\mid \sigma\cup \{1\}\notin \Delta_\mathbf{b}\}\supseteq\{\tau\setminus \{1\}\mid \tau\in D'\}$.

Conversely, let $\sigma\in\Delta_\mathbf{b}$ but $\sigma\cup\{1\}\notin \Delta_\mathbf{b}$. We want to show that $\sigma\cup\{1\}\in D'$. 

\begin{claim}
The cardinality of $\sigma$ is $p$.
\end{claim}

\begin{proof}[Proof of Claim]
If $|\sigma|=p+1$, then $\mathbf{b}=\displaystyle\sum_{i\in\sigma}\mathbf{a}^i$. Note that $1\notin \sigma$, then we have 
\[
\sum_{i\in\sigma}\mathbf{a}^i_0\leq \sum_{i=1}^{p+2}\mathbf{a}^i_0-d \\
\implies d-1\leq \mathbf{a}^{p+2}_0.
\]
Since $\mathbf{a}^{p+2}_0\leq d-2$, we obtain a contradiction.

If $|\sigma|\leq p-1$, then $\displaystyle\sum_{i\in\sigma}\mathbf{a}^i_0\leq \sum_{i=1}^p\mathbf{a}^i_0-d$. We compute \begin{align*}
    \mathbf{b}_0-d-\sum_{i\in \sigma}\mathbf{a}^i_0 &\geq \displaystyle\sum_{i=1}^{p+1}\mathbf{a}^i_0-1-d-\sum_{i=1}^p\mathbf{a}^i_0+d\\
    &= \mathbf{a}^{p+1}_0-1\\
    &\geq 0.
\end{align*}
This contradicts the assumption that $\sigma \cup \{1\} \notin \Delta_{\mathbf{b}}$.

Therefore, the cardinality of $\sigma$ is $p$, completing the proof of the claim.
\end{proof}
Since $\sigma\in\Delta_\mathbf{b}$ and $\sigma\cup \{1\}\notin \Delta_\mathbf{b}$, we have \(\displaystyle\sum_{i\in\sigma}\mathbf{a}^i_0\geq \mathbf{b}_0+1-d=\displaystyle\sum_{i=2}^{p+1}\mathbf{a}^i_0\). Since $|\sigma|=p$, we have $\displaystyle\sum_{i=2}^{p+1}\mathbf{a}^i_0\geq \displaystyle\sum_{i\in\sigma}\mathbf{a}^i_0$. Thus, $\displaystyle\sum_{i\in\sigma}\mathbf{a}^i_0=\displaystyle\sum_{i=2}^{p+1}\mathbf{a}^i_0$.

\begin{claim}
    $\{2,\cdots,\binom{r}{2}\}\subseteq\sigma$.
\end{claim}
\begin{proof}[Proof of Claim]
If $p=2$, then $r=2$, and the claim holds.

If $p\geq 3$, then $r\geq 3$. Suppose $\{2,\cdots,\binom{r}{2}\}\nsubseteq\sigma$, then there is an $k\in\{2,\cdots,\binom{r}{2}\}$ but $k\notin \sigma$. We have 
\[\sum_{i\in\sigma}\mathbf{a}^i_0\leq \sum_{i=2}^{p+2}\mathbf{a}^i_0-\mathbf{a}^k_0\leq \sum_{i=2}^{p+1}\mathbf{a}^i_0-1.\] This contradicts that $\displaystyle\sum_{i\in\sigma}\mathbf{a}^i_0=\displaystyle\sum_{i=2}^{p+1}\mathbf{a}^i_0$.

\end{proof}
By claim 2, we know that $\displaystyle\sum_{i\in\sigma\setminus\{2,\cdots,\binom{r}{2}\}}\mathbf{a}^i_0=\displaystyle\sum_{i=\binom{r}{2}+1}^{p+1}\mathbf{a}^i_0$. Therefore, $\sigma\setminus\{2,\cdots,\binom{r}{2}\}\in \binom{\{\binom{r}{2}+1,\cdots,\binom{r+1}{2}\}}{p+1-\binom{r}{2}}$.

Finally, we need to show $\mathbf{b}_1-\displaystyle\sum_{i\in\sigma}\mathbf{a}^i_1=1$ or $\mathbf{b}_1-\displaystyle\sum_{i\in\sigma}\mathbf{a}^i_1=0$. 

Assume not, then $\mathbf{b}_1-\displaystyle\sum_{i\in\sigma}\mathbf{a}^i_1\geq 2$. Note that \begin{align*}
    \mathbf{b}_0-\displaystyle\sum_{i\in\sigma}\mathbf{a}^i_0&=\mathbf{b}_0-\displaystyle\sum_{i=2}^{p+1}\mathbf{a}^i_0\\
    &=\sum_{i=1}^{p+1}\mathbf{a}^i_0-1-\sum_{i=2}^{p+1}\mathbf{a}^i_0\\
    &=d-1.
\end{align*}
We compute
\begin{align*}
     \mathbf{b}_2-\displaystyle\sum_{i\in\sigma}\mathbf{a}^i_2&=d(p+1)-\mathbf{b}_0-\mathbf{b}_1-(dp-\displaystyle\sum_{i\in\sigma}\mathbf{a}^i_0-\displaystyle\sum_{i\in\sigma}\mathbf{a}^i_1)\\
     &=d-(\mathbf{b}_0-\displaystyle\sum_{i\in\sigma}\mathbf{a}^i_0)-(\mathbf{b}_1-\displaystyle\sum_{i\in\sigma}\mathbf{a}^i_1)\\
     &\leq d-(d-1)-2\\
     &=-1,
\end{align*}
which is a contradiction.

Therefore, $\sigma\cup\{1\}\in D'$. This means that $\{\sigma\in\Delta_\mathbf{b}\mid \sigma\cup \{1\}\notin \Delta_\mathbf{b}\}\subseteq\{\tau\setminus \{1\}\mid \tau\in D'\}$. The lemma is proved.
\end{proof}

\begin{theorem}\label{wedgesum}
   Under the same assumptions as in Lemma~\ref{criticalsimplex}, the simplicial complex $\Delta_{\mathbf{b}}$ is homotopy equivalent to the wedge sum of $\#D$ copies of the sphere $S^{p-1}$. In particular, the Betti number $\beta_{p,\mathbf{b}}$ equals $\#D$.

\end{theorem}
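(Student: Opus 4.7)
The plan is to apply discrete Morse theory via the gradient vector field constructed in Example~\ref{dvf} with $v=\{1\}$, combined with the explicit description of its critical simplices given by Lemma~\ref{criticalsimplex}. By that example and the subsequent Corollary, the critical simplices are $\{1\}$ together with all $\sigma\in\Delta_{\mathbf{b}}$ satisfying $\sigma\cup\{1\}\notin\Delta_{\mathbf{b}}$. Lemma~\ref{criticalsimplex} identifies the latter set with $\{\tau\setminus\{1\}\mid \tau\in D'\}$, and since each $\tau\in D'$ has cardinality $p+1$, every non-trivial critical simplex has cardinality $p$, hence dimension $p-1$. The count of such simplices is exactly $\#D$.

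Next, by Forman's main theorem (cited right after Example~\ref{dvf}), $\Delta_{\mathbf{b}}$ is homotopy equivalent to a CW complex $X$ with a single $0$-cell and $\#D$ cells in dimension $p-1$, and no cells in any other dimension. Since we are in the regime $p\geq 2$, the skeleton $X^{(p-2)}$ consists of a single point, so the attaching map $S^{p-2}\to X^{(p-2)}$ of each $(p-1)$-cell is necessarily constant. Therefore $X$ is obtained by attaching $\#D$ copies of $D^{p-1}$ to a point along their entire boundaries, which is the wedge $\bigvee_{i=1}^{\#D} S^{p-1}$.

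Combining this with the reduced homology of a wedge of equidimensional spheres yields $\tilde{H}_{p-1}(\Delta_{\mathbf{b}};\mathbb{C})\cong\mathbb{C}^{\#D}$ and $\tilde{H}_{q}(\Delta_{\mathbf{b}};\mathbb{C})=0$ for all other $q$. Hochster's formula then gives $\beta_{p,\mathbf{b}}=\#D$, as desired. I do not expect a serious obstacle here: the hard combinatorial work is already contained in Lemma~\ref{criticalsimplex}, and the only point requiring care is the observation that all non-$\{1\}$ critical simplices live in the \emph{same} dimension $p-1$, which is precisely what lets the attaching-map argument collapse the CW complex into a wedge of spheres rather than something with nontrivial gluing data.
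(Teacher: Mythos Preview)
Your proof is correct and follows essentially the same approach as the paper: both apply the gradient vector field of Example~\ref{dvf} at the vertex $1$, invoke Lemma~\ref{criticalsimplex} to see that the non-$\{1\}$ critical simplices are exactly the $\#D$ sets $\tau\setminus\{1\}$ (all of dimension $p-1$), and then use Forman's main theorem and Hochster's formula. You are in fact a bit more careful than the paper in explaining why the resulting CW complex with one $0$-cell and $\#D$ cells in dimension $p-1$ must be a wedge of spheres (via the constant attaching maps since $p\geq 2$), whereas the paper asserts this directly.
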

\begin{proof}
  By Lemma~\ref{criticalsimplex}, the cardinality of $\{\sigma\in\Delta_\mathbf{b}\mid \sigma\cup \{1\}\notin \Delta_\mathbf{b}\}$ is equal to $\#D$ and \[N_{1,q}=\begin{cases}
      \#D &\text{if } q=p-1,\\
      0&\text{if }q\neq p-1.
  \end{cases}\] By Corollary~\ref{Nvq}, the simplicial complex $\Delta_\mathbf{b}$ is homotopy equivalent to a CW complex with exactly one $0$-cell and $\#D$  cells of dimension $(p-1)$. Hence, it is homotopy equivalent to the wedge sum of $\#D$ copies of $S^{p-1}$. By Hochster’s formula, the Betti number $\beta_{p,\mathbf{b}}$ equals $\#D$.
    
\end{proof}
\begin{corollary}
    Under the same assumptions as in Lemma~\ref{criticalsimplex}, then $\beta_{p,\mathbf{b}}\neq 0$ if and only if  $\displaystyle\sum_{i=1}^{p+1}\mathbf{a}^i_2\leq \mathbf{b}_1\leq 1+\displaystyle\sum_{i=1}^{p+1}\mathbf{a}^i_1$.
\end{corollary}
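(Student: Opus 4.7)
The plan is to convert the condition $D \neq \emptyset$ from Theorem~\ref{wedgesum} into the stated inequalities via an explicit enumeration of the sums $\sum_{i\in I}\mathbf{a}^i_1$ appearing in the definition of $D$.

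First, I would describe the structure of the $r$-th block $B_r \defeq \{\binom{r}{2}+1,\ldots,\binom{r+1}{2}\}$ explicitly. Under the lexicographic order on $\mathcal{A} \subset \mathbb{N}^3$, the $r$ indices in $B_r$ correspond exactly to the vectors $(d-r+1,\, r-1-j,\, j)$ for $j = 0, 1, \ldots, r-1$. Consequently, for any $I \subseteq B_r$ with $|I| = s \defeq p+1 - \binom{r}{2}$, the value $\sum_{i\in I}\mathbf{a}^i_1$ is the sum of $s$ distinct integers drawn from $\{0, 1, \ldots, r-1\}$.

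Second, I would establish the key combinatorial lemma: as $I$ ranges over size-$s$ subsets of $B_r$, the sum $\sum_{i\in I}\mathbf{a}^i_1$ attains exactly the integers in the closed interval $\bigl[\binom{s}{2},\; \binom{r}{2}-\binom{r-s}{2}\bigr]$. The extremes arise from $\{0,1,\ldots,s-1\}$ and $\{r-s,\ldots,r-1\}$; every intermediate integer is reached by an exchange argument, since swapping an element of the current subset for an adjacent non-element changes the sum by exactly one. I would then compute the relevant partial sums in closed form. By the hockey-stick identity,
\[\sum_{i=1}^{\binom{r}{2}}\mathbf{a}^i_1 \;=\; \sum_{k=1}^{r-1}\binom{k}{2} \;=\; \binom{r}{3},\]
and by the symmetry of second/third coordinates within each block $\sum_{i=1}^{\binom{r}{2}}\mathbf{a}^i_2 = \binom{r}{3}$. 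Adding the contributions from the first $s$ indices of $B_r$ yields
\[\sum_{i=1}^{p+1}\mathbf{a}^i_1 \;=\; \binom{r}{3} + \binom{r}{2} - \binom{r-s}{2}, \qquad \sum_{i=1}^{p+1}\mathbf{a}^i_2 \;=\; \binom{r}{3} + \binom{s}{2}.\]

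Finally, by Theorem~\ref{wedgesum}, $\beta_{p,\mathbf{b}} \neq 0$ iff $D \neq \emptyset$, which holds iff the ``attainable sum'' interval $\bigl[\binom{s}{2},\; \binom{r}{2}-\binom{r-s}{2}\bigr]$ meets the target window $\bigl[\mathbf{b}_1 - \binom{r}{3} - 1,\; \mathbf{b}_1 - \binom{r}{3}\bigr]$ from the definition of $D$. Two closed integer intervals $[a,b]$ and $[c,d]$ intersect iff $a \leq d$ and $c \leq b$; here this gives $\binom{s}{2} \leq \mathbf{b}_1 - \binom{r}{3}$ together with $\mathbf{b}_1 - \binom{r}{3} - 1 \leq \binom{r}{2} - \binom{r-s}{2}$, and substituting the closed-form sums above recasts these two inequalities as exactly $\sum_{i=1}^{p+1}\mathbf{a}^i_2 \leq \mathbf{b}_1 \leq 1 + \sum_{i=1}^{p+1}\mathbf{a}^i_1$. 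The only step requiring genuine thought is the intermediate-value lemma for subset sums, but this is a standard exchange argument; everything else is bookkeeping.
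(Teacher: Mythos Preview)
Your proposal is correct and follows the same route as the paper: both start from Theorem~\ref{wedgesum}, reduce $\beta_{p,\mathbf{b}}\neq 0$ to $D\neq\emptyset$, and then rewrite that condition as the stated inequalities on $\mathbf{b}_1$. The paper simply asserts the final equivalence in one sentence, whereas you actually supply the missing justification: the explicit description of the block $B_r$, the intermediate-value lemma for sums of $s$ distinct elements of $\{0,\ldots,r-1\}$, and the closed-form identities $\sum_{i=1}^{p+1}\mathbf{a}^i_1=\binom{r}{3}+\binom{r}{2}-\binom{r-s}{2}$ and $\sum_{i=1}^{p+1}\mathbf{a}^i_2=\binom{r}{3}+\binom{s}{2}$. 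Your argument is therefore a strict elaboration of the paper's proof rather than a different approach.
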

\begin{proof}
By Theorem \ref{wedgesum}, the Betti number  $\beta_{p,\mathbf{b}}\neq 0$ if and only if $\mathbf{b}_1-\displaystyle\sum_{i=1}^{\binom{r}{2}}\mathbf{a}^i_1-1\leq \displaystyle\sum_{i\in I}\mathbf{a}^i_1 \leq \mathbf{b}_1-\displaystyle\sum_{i=1}^{\binom{r}{2}}\mathbf{a}^i_1 $ for some $I\in \binom{\{\binom{r}{2}+1,\cdots,\binom{r+1}{2}\}}{p+1-\binom{r}{2}}$. The latter condition is equivalent to 
    $\displaystyle\sum_{i=1}^{p+1}\mathbf{a}^i_2\leq \mathbf{b}_1\leq 1+\displaystyle\sum_{i=1}^{p+1}\mathbf{a}^i_1$.
\end{proof}

It is well known that for the projective space case ($m=2$), $\beta_{p,1}\neq 0$ if and only if $1\leq p\leq \binom{d+1}{2}$. Understanding the multigraded Betti numbers when $p=\binom{d+1}{2}$ is interesting, and we study this case in the next theorem.
\begin{theorem}
    Let $p=\binom{d+1}{2}$. Suppose $|\mathbf{b}|=d(p+1)$ and $\mathbf{b}_0=\displaystyle\sum_{i=1}^{p+1}\mathbf{a}^i_0-1$. Then \[\displaystyle\sum_{i=1}^p\mathbf{a}^i_1+1\leq \mathbf{b}_1\leq\displaystyle\sum_{i=1}^{p+1}\mathbf{a}^i_1 \text{ if and only if }\beta_{p,\mathbf{b}}\neq 0.\] 
    
    In particular, $\beta_{p,\mathbf{b}}=1$ for $\mathbf{b}_1$ in this range.
\end{theorem}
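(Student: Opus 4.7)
The strategy is to extend the discrete Morse argument of Lemma~\ref{criticalsimplex} and Theorem~\ref{wedgesum} to the boundary case $p=\binom{d+1}{2}$. The new feature is that $\mathbf{a}^{p+1}_0=0$, so the inequality $\mathbf{a}^{p+1}_0\geq 1$ that forced $|\sigma|=p$ in Lemma~\ref{criticalsimplex} now fails; an extra critical simplex of size $p-1$ will appear, and the Morse CW complex will have cells in dimensions $0$, $p-2$, and $p-1$. We apply the discrete vector field of Example~\ref{dvf} with $v=1$. Writing $S\defeq\sum_{i=1}^{p}\mathbf{a}^i_1$, we have $\sum_{i=1}^{p+1}\mathbf{a}^i_1=S+d$, so the hypothesized range becomes $S+1\leq \mathbf{b}_1\leq S+d$.

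The condition $\sigma\cup\{1\}\notin\Delta_{\mathbf{b}}$ becomes $\sum_{i\in\sigma}\mathbf{a}^i_0\geq A_p-d$. A size analysis parallel to that of Lemma~\ref{criticalsimplex} rules out $|\sigma|\leq p-2$ (omitting any index of $\{2,\dots,p\}$, all of which have positive first coordinate, drops the first-coordinate sum below $A_p-d$) and $|\sigma|=p+1$ (by the same argument as in Lemma~\ref{criticalsimplex}, now using $\mathbf{a}^{p+2}_0=0\leq d-2$). The surviving candidates are
\[\tau\defeq\{2,\dots,p\}\qquad\text{and}\qquad\sigma_j\defeq\{2,\dots,p\}\cup\{j\}\ \text{for }j\in\{p+1,\dots,n\}.\]
Using $\mathbf{a}^j_1+\mathbf{a}^j_2=d$ and $\mathbf{b}_1+\mathbf{b}_2=2S+d+1$, we verify that $\tau\in\Delta_{\mathbf{b}}$ iff $\mathbf{b}_1,\mathbf{b}_2\geq S$, and $\sigma_j\in\Delta_{\mathbf{b}}$ iff $\mathbf{a}^j_1\in\{\mathbf{b}_1-S-1,\mathbf{b}_1-S\}$. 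For $\mathbf{b}_1\in[S+1,S+d]$, both of these values lie in $\{0,\dots,d\}$ and correspond to unique indices $j_1\neq j_2$ in $\{p+1,\dots,n\}$, so the critical cells are exactly $\{1\},\ \tau,\ \sigma_{j_1},\ \sigma_{j_2}$.

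The main obstacle is showing that the Morse differential $\partial^M\colon C^M_{p-1}\to C^M_{p-2}$ has rank $1$. For each $\sigma_{j_k}$, the face $\sigma_{j_k}\setminus\{j_k\}=\tau$ is critical and contributes $\pm 1$ to the coefficient of $\tau$ in $\partial^M\sigma_{j_k}$. Every other $(p-2)$-face $\sigma_{j_k}\setminus\{i\}$ with $i\in\{2,\dots,p\}$ is non-critical, paired with $(\sigma_{j_k}\setminus\{i\})\cup\{1\}$; the remaining codimension-one faces of that $p$-simplex all contain the vertex $1$, and by the construction of $V$ in Example~\ref{dvf} any simplex containing $1$ appears in $V$ only as the top of a pair, never as the bottom. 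Hence every $V$-path starting from $\sigma_{j_k}\setminus\{i\}$ dead-ends at a non-critical simplex and contributes nothing further to the Morse boundary. Thus $\partial^M\sigma_{j_k}=\pm\tau$ for $k=1,2$, and the Morse chain complex $0\to\mathbb{C}^2\xrightarrow{\partial^M}\mathbb{C}\to 0\to\cdots\to 0\to\mathbb{C}\to 0$ yields $\dim\tilde{H}_{p-1}=1$. By Hochster's formula, $\beta_{p,\mathbf{b}}=1$.

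It remains to check vanishing outside $[S+1,S+d]$. If $\mathbf{b}_1\in\{S,S+d+1\}$, then exactly one of the two values $\mathbf{b}_1-S-1,\mathbf{b}_1-S$ lies in $\{0,\dots,d\}$, so only one $\sigma_j$ is critical while $\tau$ is still critical, and $\partial^M\colon\mathbb{C}\to\mathbb{C}$ is an isomorphism, giving $\tilde{H}_{p-1}=0$. If $\mathbf{b}_1<S$ or $\mathbf{b}_1>S+d+1$, neither $\tau$ nor any $\sigma_j$ lies in $\Delta_{\mathbf{b}}$, so $\{1\}$ is the only critical cell, $\Delta_{\mathbf{b}}$ is contractible, and $\beta_{p,\mathbf{b}}=0$.
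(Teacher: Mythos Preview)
Your proof is correct and follows the same overall plan as the paper: apply the gradient vector field $V$ of Example~\ref{dvf} with $v=1$, and identify the set $\{\sigma\in\Delta_\mathbf{b}\mid\sigma\cup\{1\}\notin\Delta_\mathbf{b}\}$ as $\{\tau,\sigma_{j_1},\sigma_{j_2}\}$ in the interesting range, as $\{\tau,\sigma_j\}$ at the endpoints, and as empty outside. The divergence is only in the final step. The paper \emph{enlarges} $V$ by the single pair $\{\tau<\sigma_{j_1}\}$ (respectively $\{\tau<\sigma_j\}$ at the endpoints); it checks this is still a gradient vector field, so only $\{1\}$ and $\sigma_{j_2}$ remain critical, and Theorem~2.5 immediately gives the homotopy type $S^{p-1}$ (respectively a point). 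You instead keep $V$ unchanged and compute the Morse boundary map $\partial^M$ on the resulting chain complex, showing $\partial^M\sigma_{j_k}=\pm\tau$ by tracing $V$-paths.

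Both arguments are valid. The paper's route is more self-contained relative to the background actually stated in Section~\ref{section2}: it uses only the CW-model theorem and the characterisation of gradient vector fields via closed paths, and as a bonus yields the homotopy type, not just the homology. Your route invokes the Morse chain complex and its differential in terms of gradient paths (standard in Forman's theory, e.g.\ \cite[\S8]{zbMATH02041483}, but not among the results the paper cites); it has the advantage of being conceptually uniform, since the same rank computation handles all three regimes without having to choose which extra pair to add.
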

\begin{proof}
   If $\displaystyle\sum_{i=1}^p\mathbf{a}^i_1+1\leq \mathbf{b}_1\leq\displaystyle\sum_{i=1}^{p+1}\mathbf{a}^i_1$, then $\{\sigma\in\Delta_\mathbf{b}\mid \sigma\cup \{1\}\notin \Delta_\mathbf{b}\}$ equals
   \[\Big\{\{2,\dots,p\}, \{2,\dots,p,p+1+d-\mathbf{b}_1+\displaystyle\sum_{i=1}^p\mathbf{a}^i_1\},\{2,\dots,p,p+2+d-\mathbf{b}_1+\displaystyle\sum_{i=1}^p\mathbf{a}^i_1\}\Big\}.\]

We can add the pair $\{2,\dots,p\}<\{2,\dots,p,p+1+d-\mathbf{b}_1+\displaystyle\sum_{i=1}^p\mathbf{a}^i_1\}$ to the collection $V=\{\sigma<\sigma\cup \{1\}\}$ defined in Example \ref{dvf}. This new collection is a gradient vector field of a discrete Morse function. To see this, suppose there is a non-trivial closed $V$-path 
\[\{2,\dots,p\}, \{2,\dots,p,p+1+d-\mathbf{b}_1+\displaystyle\sum_{i=1}^p\mathbf{a}^i_1\},\sigma_1, \sigma_1\cup\{1\},\cdots,\sigma_h, \sigma_h\cup\{1\},\sigma_{h+1} \ \text{for some $h\geq 1$,}\] then $1\in\sigma_2$ and $h=1$, which contradicts $\sigma_2=\{2,\dots,p\}$. 

   The critical simplices are $\{1\}$ and $\{2,\dots,p,p+2+d-\mathbf{b}_1+\displaystyle\sum_{i=1}^p\mathbf{a}^i_1\}$. This means that $\Delta_\mathbf{b}$ is homotopy to a $(p-1)$-sphere, and hence $\beta_{p,\mathbf{b}}=1$.

    If $\mathbf{b}_1=\displaystyle\sum_{i=1}^p\mathbf{a}^i_1$ or $\mathbf{b}_1=\displaystyle\sum_{i=1}^{p+1}\mathbf{a}^i_1+1$, then $\{\sigma\in\Delta_\mathbf{b}\mid \sigma\cup \{1\}\notin \Delta_\mathbf{b}\}$ equals 
    \[\Big\{\{2,\dots,p\}, \{2,\dots,p,p+1+d-\mathbf{b}_1+\displaystyle\sum_{i=1}^p\mathbf{a}^i_1\}\Big\}\] or 
    \[\Big\{\{2,\dots,p\}, \{2,\dots,p,p+2+d-\mathbf{b}_1+\displaystyle\sum_{i=1}^p\mathbf{a}^i_1\}\Big\} \ \text{respectively}.\]
In each of these two cases, We add this pair to $V$. For the same reason as above, the new collections is a gradient vector field of a discrete Morse function. The only criticial simplex is $\{1\}$. This means that $\Delta_\mathbf{b}$ is contractible, and hence $\beta_{p,\mathbf{b}}=0$.

    If $\mathbf{b}_1<\displaystyle\sum_{i=1}^p\mathbf{a}^i_1$ or $\mathbf{b}_1>\displaystyle\sum_{i=1}^{p+1}\mathbf{a}^i_1+1$, then $\{\sigma\in\Delta_\mathbf{b}\mid \sigma\cup \{1\}\notin \Delta_\mathbf{b}\}$ is empty.
In this case, the only criticial simplex is $\{1\}$. This means that $\Delta_\mathbf{b}$ is contractible, and hence $\beta_{p,\mathbf{b}}=0$.
\end{proof}
\section{GENERALIZATION TO HIGHER DIMENSIONAL PROJECTIVE SPACE}\label{general}
In Sections~\ref{VANISHING THEOREM 1}, \ref{VANISHING THEOREM 2}, and~\ref{NON-VANISHING RESULTS}, we proved Theorems~\ref{vanishing1}, \ref{vanishing2}, and~\ref{nonvanishing D} for the case $m=2$. The same arguments apply to $m>2$, and we state the general results in this section. Throughout this section, we assume $m\geq 2$.

\begin{theorem}
      Let $j$ be a positive integer, and let $|\mathbf{b}|=dj$. If $\mathbf{b}_0\geq A_j\defeq \displaystyle\sum_{i\in\{1,...,j\}\cap \{1,...,n\}}\mathbf{a}^i_0$, then $\Delta_\mathbf{b}$ is a cone over $(d,0,\ldots,0)$, where $(d,0,\ldots,0)$ lies in $\mathbb{N}^{m+1}$.
\end{theorem}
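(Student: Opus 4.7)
The plan is to follow the structure of the proof of Theorem~\ref{A_j} essentially verbatim, since the argument there does not really use $m=2$ in an essential way. The only structural input is that $\mathbf{a}^1=(d,0,\ldots,0)$ is the lex-first element of $\mathcal{A}$, which is still the case for arbitrary $m$, and that $\mathbf{a}^1$ is the unique element of $\mathcal{A}$ with first coordinate equal to $d$.

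First I would take $\sigma\in\Delta_\mathbf{b}$ with $1\notin\sigma$ and reduce the goal $\sigma\cup\{1\}\in\Delta_\mathbf{b}$ to the scalar inequality
\[
  \mathbf{b}_0 - d - \sum_{i\in\sigma}\mathbf{a}^i_0 \ \geq\ 0, \tag{$*$}
\]
by using that only the $0$-th coordinate of $\mathbf{a}^1$ is nonzero, so the other coordinates of $\mathbf{b}-\sum_{i\in\sigma\cup\{1\}}\mathbf{a}^i$ are unchanged from those of $\mathbf{b}-\sum_{i\in\sigma}\mathbf{a}^i\in\mathbb{N}^{m+1}$. Then I would split into the two cases $1\le j\le n-1$ and $j\ge n$ exactly as before.

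In the case $1\le j\le n-1$, the key step is to prove $|\sigma|\le j-1$. Assuming for contradiction that $|\sigma|=j$ (the upper bound $|\sigma|\le j$ follows from $\mathbf{b}-\sum\mathbf{a}^i\in\mathbb{N}^{m+1}$ combined with $|\mathbf{a}^i|=d$), the chain $\mathbf{b}_0\ge\sum_{i=1}^{j}\mathbf{a}^i_0>\sum_{i=2}^{j+1}\mathbf{a}^i_0\ge\sum_{i\in\sigma}\mathbf{a}^i_0$ still holds (the strict middle inequality is just $\mathbf{a}^1_0=d>\mathbf{a}^{j+1}_0$, since $\mathbf{a}^1$ is the only element of $\mathcal{A}$ with $0$-th coordinate $d$). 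This forces
\[
  \mathbf{b}_1+\cdots+\mathbf{b}_m \ <\ \sum_{k=1}^{m}\sum_{i\in\sigma}\mathbf{a}^i_k,
\]
which contradicts the componentwise inequalities $\mathbf{b}_k\ge\sum_{i\in\sigma}\mathbf{a}^i_k$ for $k=1,\dots,m$ coming from $\mathbf{b}-\sum_{i\in\sigma}\mathbf{a}^i\in\mathbb{N}^{m+1}$; this is precisely where the sum $\mathbf{b}_1+\mathbf{b}_2$ of the original proof is replaced by $\mathbf{b}_1+\cdots+\mathbf{b}_m$. Once $|\sigma|\le j-1$ is in hand, $(*)$ follows from $\mathbf{b}_0\ge\sum_{i=1}^{j}\mathbf{a}^i_0$ and $\sum_{i=2}^{j}\mathbf{a}^i_0\ge\sum_{i\in\sigma}\mathbf{a}^i_0$.

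The case $j\ge n$ is identical to the $m=2$ version: from $\mathbf{b}_0\ge\sum_{i=1}^n\mathbf{a}^i_0$ we directly obtain $\mathbf{b}_0-d\ge\sum_{i=2}^n\mathbf{a}^i_0\ge\sum_{i\in\sigma}\mathbf{a}^i_0$. There is no real obstacle in the argument; the only point requiring care is verifying that the vector identity $\sum_{k=1}^m\mathbf{b}_k\ge\sum_{k=1}^m\sum_{i\in\sigma}\mathbf{a}^i_k$ continues to follow from nonnegativity of $\mathbf{b}-\sum_{i\in\sigma}\mathbf{a}^i$ component by component, which is immediate. Hence the theorem generalizes without modification.
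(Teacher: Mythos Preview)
Your proposal is correct and follows exactly the approach the paper intends: Section~\ref{general} explicitly states that ``the same arguments apply to $m>2$'' and gives no separate proof, so the intended argument is precisely the one from Theorem~\ref{A_j} with $\mathbf{b}_1+\mathbf{b}_2$ replaced by $\mathbf{b}_1+\cdots+\mathbf{b}_m$, as you have done.
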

\begin{corollary}
      Let $j$ be a positive integer, and let $|\mathbf{b}|=dj$. If $\mathbf{b}_0\geq A_j$, then $\beta_{p,\mathbf{b}}=0$ for all $p\in\mathbb{Z}$.
\end{corollary}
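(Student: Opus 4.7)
The plan is to derive the corollary directly from the theorem immediately preceding, combined with Hochster's formula and the fact that a cone is contractible. No new geometric or combinatorial input is required; the substance is entirely packaged in the preceding theorem.

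Concretely, I would proceed in three short steps. First, I would invoke the theorem: under the hypothesis $\mathbf{b}_0 \geq A_j$, the simplicial complex $\Delta_\mathbf{b}$ is a cone over the vertex indexed by $(d,0,\ldots,0)\in\mathbb{N}^{m+1}$. Second, I would observe that any cone is contractible, so its reduced simplicial homology vanishes in every degree: $\tilde{H}_q(\Delta_\mathbf{b};\mathbb{C}) = 0$ for all integers $q$. Third, I would apply Hochster's formula to translate this into the statement about Betti numbers, namely $\beta_{p,\mathbf{b}} = \dim_\mathbb{C}\tilde{H}_{p-1}(\Delta_\mathbf{b};\mathbb{C}) = 0$ for every $p\in\mathbb{Z}$.

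The only technicality worth addressing is the edge case $p = 0$ (reduced $(-1)$-homology). Because $\mathbf{b}\in\mathbb{N}\mathcal{A}$ (as required by Definition~\ref{defDelta}), the empty subset of $\{1,\ldots,n\}$ lies in $\Delta_\mathbf{b}$, so $\Delta_\mathbf{b}$ is nonempty and hence $\tilde{H}_{-1}(\Delta_\mathbf{b};\mathbb{C}) = 0$ as well. For $p < 0$ the reduced homology vanishes by convention, and for $p$ exceeding the dimension of $\Delta_\mathbf{b}$ the vanishing is automatic. There is no real obstacle here; the corollary is essentially a cosmetic restatement of the theorem through the dictionary supplied by Hochster's formula.
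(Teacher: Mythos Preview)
Your proposal is correct and matches the paper's approach exactly: the paper states this corollary without proof precisely because it is the immediate consequence of the preceding theorem (that $\Delta_\mathbf{b}$ is a cone) together with Hochster's formula. Your handling of the edge cases is more careful than anything the paper spells out, but the logical content is identical.
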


We use the same definition of $f_l$ as before, and $f_l$ is again an increasing function. Assume $j\geq {d+m-1\choose m-1}$. The function $\phi(l)\defeq \lceil \frac{dj-l}{m} \rceil-f_l$ is a decreasing function on $\{0,...,A_j-1\}$. Let $\tilde{l}_j$ be the largest integer $l$ in $\{0,...,A_j-1\}$ such that $\phi(l)\geq 0$. 
 \begin{theorem}
   Let $j$ be a positive integer such that $j\geq {d+m-1\choose m-1}$, and let $|\mathbf{b}|=dj$. If $\mathbf{b}_0\leq \tilde{l}_j$, then $\Delta_\mathbf{b}$ is a cone over one of the points $(0,d,0,\ldots,0)$, $(0,0,d,0,\ldots,0)$, $\ldots$ , $(0,\ldots,0,d)$ in $\mathbb{N}^{m+1}$. 
 \end{theorem}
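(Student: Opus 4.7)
The plan is to mirror the proof of Theorem~\ref{l} line by line, after first generalizing Lemma~\ref{monotonicity} to arbitrary $m \geq 2$. Fix a coordinate index $s \in \{1, \ldots, m\}$ and set
\[
f_l \defeq \max_{\sigma \in C_l} \sum_{i \in \sigma} \mathbf{a}^i_s,
\]
with $C_l$ as in Lemma~\ref{monotonicity} and $A \defeq \sum_{i=1}^n \mathbf{a}^i_0$. I will prove that $f_l$ is non-decreasing on $\{0,\dots,A-1\}$ by the direct analogue of the $m=2$ argument. Fix $\sigma \in C_l$ achieving $f_l$; the key step is to show that $\{\mathbf{a} \in \mathcal{A} \mid \mathbf{a}_s = 0,\ \mathbf{a}_0 \geq 1\} \not\subseteq \sigma$. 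Assume otherwise and pick any $t \in \{1,\dots,m\} \setminus \{s\}$. For each $\mathbf{a}' \in \mathcal{A}$ with $\mathbf{a}'_0 \geq 1$ and $\mathbf{a}'_s \geq 1$, the element $\mathbf{a}$ obtained from $\mathbf{a}'$ by moving all $\mathbf{a}'_s$ units of weight from the $s$-th to the $t$-th coordinate has $\mathbf{a}_0 = \mathbf{a}'_0 \geq 1$ and $\mathbf{a}_s = 0$, so $\mathbf{a} \in \sigma$ by assumption, and the swap $(\sigma \setminus \{\mathbf{a}\}) \cup \{\mathbf{a}'\}$ would lie in $C_l$ but strictly increase $\sum \mathbf{a}^i_s$ unless $\mathbf{a}' \in \sigma$. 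Thus $\{\mathbf{a} \in \mathcal{A} \mid \mathbf{a}_0 \geq 1\} \subseteq \sigma$, forcing $l \geq A$, a contradiction. Given this, we finish exactly as in the $m=2$ proof: pick an element $\mathbf{a}''$ of $\{\mathbf{a} \mid \mathbf{a}_s=0,\ \mathbf{a}_0 \geq 1\} \setminus \sigma$ with minimal $0$-th coordinate and perform the single-step swap (or a straight addition, when the natural partner is absent) that produces an element of $C_{l+1}$ of $s$-coordinate sum at least $f_l$.

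Granting this generalized lemma, the definition of $\tilde{l}_j$ and the fact that $\phi(l) = \lceil (dj - l)/m \rceil - f_l$ is decreasing on $\{0,\dots,A_j-1\}$ carry over without change. Now assume $|\mathbf{b}| = dj$ and $\mathbf{b}_0 \leq \tilde{l}_j$. Then $\mathbf{b}_1 + \cdots + \mathbf{b}_m = dj - \mathbf{b}_0 \geq dj - \tilde{l}_j$, and by pigeonhole some $s \in \{1,\dots,m\}$ satisfies
\[
\mathbf{b}_s \geq \Big\lceil \frac{\mathbf{b}_1 + \cdots + \mathbf{b}_m}{m} \Big\rceil \geq \Big\lceil \frac{dj - \tilde{l}_j}{m} \Big\rceil.
\]
Writing $v_s \in \mathbb{N}^{m+1}$ for the vector with $d$ in position $s$ and zeros elsewhere, I claim that $\Delta_\mathbf{b}$ is a cone over $v_s$. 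For $\sigma \in \Delta_\mathbf{b}$ with $v_s \notin \sigma$, set $l' \defeq \sum_{i \in \sigma \cup \{v_s\}} \mathbf{a}^i_0 = \sum_{i \in \sigma} \mathbf{a}^i_0 \leq \mathbf{b}_0 \leq \tilde{l}_j$; applying the $s$-th-coordinate version of $f$ yields
\[
\mathbf{b}_s \geq \Big\lceil \frac{dj - \tilde{l}_j}{m} \Big\rceil \geq f_{\tilde{l}_j} \geq f_{l'} \geq \sum_{i \in \sigma \cup \{v_s\}} \mathbf{a}^i_s = d + \sum_{i \in \sigma} \mathbf{a}^i_s,
\]
so $\sigma \cup \{v_s\} \in \Delta_\mathbf{b}$, as required.

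The main obstacle is the faithful generalization of Lemma~\ref{monotonicity}. For $m=2$, the distinguished set $\{(1,0,d-1), \ldots, (d,0,0)\}$ is a one-dimensional chain, and the swap combinatorics---both the forcing step inside the claim and the subsequent single-step swap---is almost automatic. For $m \geq 3$, the analogous set $\{\mathbf{a} \in \mathcal{A} \mid \mathbf{a}_s = 0,\ \mathbf{a}_0 \geq 1\}$ is genuinely higher-dimensional, so one must specify which swap to perform; the \emph{dump all the $s$-coordinate weight onto a fixed auxiliary coordinate $t$} recipe above is the canonical choice that makes every step of the argument go through with only cosmetic changes.
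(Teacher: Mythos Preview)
Your proposal is correct and follows exactly the route the paper indicates: the paper does not give a separate proof for general $m$ but simply asserts that ``the same arguments apply to $m>2$'' and that ``$f_l$ is again an increasing function,'' and you have faithfully carried out those arguments. Two small remarks: first, by the $S_m$-symmetry on the last $m$ coordinates your $s$-dependent $f_l$ coincides with the paper's single $f_l$, so $\tilde{l}_j$ is well defined independently of $s$; second, your final swap step can be made fully explicit by taking the partner of $\mathbf{a}''$ (with $\mathbf{a}''_0=k\geq 2$) to be $\mathbf{a}''-e_0+e_t$ for any $t\in\{1,\dots,m\}\setminus\{s\}$, which lies in $\sigma$ by minimality of $k$, while for $k=1$ the straight addition $\sigma\cup\{\mathbf{a}''\}$ already works.
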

 
 \begin{corollary}
       Let $j$ be a positive integer such that $j\geq {d+m-1\choose m-1}$, and let $|\mathbf{b}|=dj$. If $\mathbf{b}_0\leq \tilde{l}_j$, then $\beta_{p,\mathbf{b}}=0$ for all $p\in \mathbb{Z}$.
 \end{corollary}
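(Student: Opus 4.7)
The statement to prove is the corollary stating $\beta_{p,\mathbf{b}}=0$ under the hypotheses that $j \geq \binom{d+m-1}{m-1}$ and $\mathbf{b}_0 \leq \tilde{l}_j$. My plan is to derive this as an immediate topological consequence of the preceding theorem, which asserts that $\Delta_\mathbf{b}$ is a cone over some standard generator $(0,\ldots,0,d,0,\ldots,0)$ of $\mathcal{A}$.

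The first step is to invoke the preceding theorem to conclude that, under the given hypotheses on $j$ and $\mathbf{b}_0$, the simplicial complex $\Delta_\mathbf{b}$ is a cone over a vertex $v \in \mathcal{A}$, where $v$ is one of $(0,d,0,\ldots,0), \ldots, (0,\ldots,0,d)$. Any cone is contractible: explicitly, the straight-line homotopy to the apex $v$ (or equivalently, the fact that the identity is homotopic to the constant map at $v$ via the cone structure) is a deformation retraction onto $\{v\}$. Therefore $\tilde{H}_{q}(\Delta_\mathbf{b};\mathbb{C}) = 0$ for every integer $q$.

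Finally, by Hochster's formula as stated in Section~\ref{section2}, one has the identity
\[
\beta_{p,\mathbf{b}} \;=\; \dim_\mathbb{C} \tilde{H}_{p-1}(\Delta_\mathbf{b};\mathbb{C})
\]
for every $p \in \mathbb{Z}$. Combining this with the vanishing of reduced homology from the previous step yields $\beta_{p,\mathbf{b}}=0$ for all $p \in \mathbb{Z}$, which is exactly the conclusion of the corollary.

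There is essentially no obstacle in this corollary itself: it is a two-line deduction that only packages the topological content of the preceding theorem into the language of multigraded Betti numbers. All of the substantive combinatorial work lives in the theorem, specifically in generalizing the proof of Theorem~\ref{l} (the $m=2$ case) to arbitrary $m \geq 2$ via the symmetric role played by coordinates $\mathbf{b}_1,\ldots,\mathbf{b}_m$ and the condition $\mathbf{b}_s \geq \lceil (\mathbf{b}_1+\cdots+\mathbf{b}_m)/m \rceil$ for some $s$.
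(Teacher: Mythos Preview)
Your proposal is correct and follows exactly the paper's intended approach: the corollary is an immediate consequence of the preceding theorem (that $\Delta_\mathbf{b}$ is a cone) together with Hochster's formula, and indeed the paper does not even write out a separate proof. Your remark that all the real work lies in the theorem, not the corollary, is accurate.
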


\begin{lemma}\label{m-criticalsimplex}
Let $m\leq p\leq \binom{d+m-1}{m}-1$, and let $r$ be the unique integer such that $p+1\in I_r\defeq \Big(\binom{r+m-2}{m},\binom{r+m-1}{m}\Big]$. Suppose $|\mathbf{b}|=d(p+1)$ and $\mathbf{b}_0=\displaystyle\sum_{i=1}^{p+1}\mathbf{a}^i_0-1$. We define two sets 
\begin{gather*}
    D\defeq \Bigg\{I\in \binom{\{\binom{r+m-2}{m}+1,\dots,\binom{r+m-1}{m}\}}{p+1-\binom{r+m-2}{m}}\mid \hspace*{.5\linewidth}
    \\
    \hspace*{.3\linewidth}
    \mathbf{b}_t-\displaystyle\sum_{i=1}^{\binom{r+m-2}{m}}\mathbf{a}^i_t-1\leq \displaystyle\sum_{i\in I}\mathbf{a}^i_t \leq \mathbf{b}_t-\displaystyle\sum_{i=1}^{\binom{r+m-2}{m}}\mathbf{a}^i_t \quad \forall 1\leq t\leq m-1\Bigg\}
\end{gather*}
and   
    \[D'\defeq \Big\{\{1,\dots,\binom{r+m-2}{m}\}\cup I\mid I\in D\Big\}.\]
Then \[\{\sigma\in\Delta_\mathbf{b}\mid \sigma\cup \{1\}\notin \Delta_\mathbf{b}\}=\{\tau\setminus \{1\}\mid \tau\in D'\}.\]


\end{lemma}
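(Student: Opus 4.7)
The plan is to prove the claimed set equality by two inclusions, closely following the proof of Lemma~\ref{criticalsimplex} with $\binom{r}{2}$ replaced by $\binom{r+m-2}{m}$ and $\binom{r+1}{2}$ replaced by $\binom{r+m-1}{m}$. Two facts will be used throughout. First, the indices $i \in \{\binom{r+m-2}{m}+1,\dots,\binom{r+m-1}{m}\}$ are exactly those with $\mathbf{a}^i_0 = d-r+1$, so that for any $\tau \in D'$ one has $\sum_{i\in\tau}\mathbf{a}^i_0 = \sum_{i=1}^{p+1}\mathbf{a}^i_0 = \mathbf{b}_0 + 1$. Second, since $|\mathbf{b}| = d(p+1) = \sum_{i\in\tau}|\mathbf{a}^i|$, summing the quantity $\mathbf{b}_t - \sum_{i\in\tau}\mathbf{a}^i_t$ over all $m+1$ coordinates yields $0$, so that $\sum_{t=1}^m(\mathbf{b}_t - \sum_{i\in\tau}\mathbf{a}^i_t) = 1$.

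For the inclusion $(\supseteq)$, I take $\tau = \{1,\dots,\binom{r+m-2}{m}\}\cup I \in D'$ and verify both $\tau\setminus\{1\} \in \Delta_{\mathbf{b}}$ and $\tau\notin \Delta_{\mathbf{b}}$. The latter is immediate, since the $0$-th coordinate deficit of $\tau$ is $-1$. For the former I check non-negativity of $\mathbf{b}-\sum_{i\in\tau\setminus\{1\}}\mathbf{a}^i$ coordinate by coordinate: coordinate $0$ contributes $d-1 \geq 1$; for $1\leq t\leq m-1$ the defining inequalities of $D$ give $\mathbf{b}_t - \sum_{i\in\tau}\mathbf{a}^i_t \in \{0,1\}$; and the remaining coordinate $t = m$ equals $1 - \sum_{t=1}^{m-1}(\mathbf{b}_t - \sum_{i\in\tau}\mathbf{a}^i_t)$ by the sum identity above. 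Non-negativity of this last quantity is the combinatorially delicate step discussed below.

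For the inclusion $(\subseteq)$, I take $\sigma \in \Delta_{\mathbf{b}}$ with $\sigma\cup\{1\}\notin\Delta_{\mathbf{b}}$ and show $\sigma\cup\{1\}\in D'$. Two claims are needed, both proved exactly as in the $m=2$ case. Claim~1 is $|\sigma|=p$: the possibility $|\sigma|=p+1$ would force $\mathbf{a}^{p+2}_0 \geq d-1$, impossible in the stated range since $\mathbf{a}^{p+2}_0 \leq d-2$; and $|\sigma|\leq p-1$ would yield $\mathbf{b}_0 - d - \sum_{i\in\sigma}\mathbf{a}^i_0 \geq \mathbf{a}^{p+1}_0 - 1 \geq 0$, contradicting $\sigma\cup\{1\}\notin\Delta_{\mathbf{b}}$ (the upper bound $p\leq \binom{d+m-1}{m}-1$ is precisely what guarantees $\mathbf{a}^{p+1}_0 \geq 1$). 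Claim~2 is $\{2,\dots,\binom{r+m-2}{m}\}\subseteq\sigma$, proved by the same swap-and-compare argument that forces $\sum_{i\in\sigma}\mathbf{a}^i_0 = \sum_{i=2}^{p+1}\mathbf{a}^i_0$. The sum identity together with $\sigma\in\Delta_{\mathbf{b}}$ then forces each $\mathbf{b}_t - \sum_{i\in\sigma}\mathbf{a}^i_t$ into $\{0,1\}$, recovering the $D$-conditions.

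I expect the main obstacle to be the last-coordinate verification in direction $(\supseteq)$: for $m=2$ it comes for free from the sum identity together with $\mathbf{b}_1 - \sum_{i\in\tau}\mathbf{a}^i_1 \in \{0,1\}$, but for $m>2$ the $D$-conditions only explicitly control $m-1$ deficits, so one must use the relation $\sum_{i\in I}\mathbf{a}^i_m = (r-1)|I| - \sum_{t=1}^{m-1}\sum_{i\in I}\mathbf{a}^i_t$, a consequence of $|\mathbf{a}^i|=d$ and $\mathbf{a}^i_0 = d-r+1$ on $I$, together with the $D$-inequalities to conclude the $m$-th deficit lies in $\{0,1\}$ as well.
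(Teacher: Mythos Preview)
Your plan mirrors the paper's own treatment: the paper gives no separate argument for this lemma beyond declaring that ``the same arguments apply'' as in Lemma~\ref{criticalsimplex}, and you have correctly traced how Claim~1, Claim~2, and the inclusion $(\subseteq)$ carry over verbatim.

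The gap is exactly where you suspected, in the last-coordinate check for $(\supseteq)$, but it is not merely delicate---it does not go through as stated. From the sum identity you get
\[
\mathbf{b}_m - \sum_{i\in\tau}\mathbf{a}^i_m \;=\; 1 - \sum_{t=1}^{m-1}\Bigl(\mathbf{b}_t - \sum_{i\in\tau}\mathbf{a}^i_t\Bigr),
\]
and the $D$-conditions only force each of the $m-1$ summands on the right into $\{0,1\}$ \emph{independently}. Nothing prevents two or more of them from equalling $1$, in which case the $m$-th deficit is negative and $\tau\setminus\{1\}\notin\Delta_{\mathbf{b}}$. The relation $\sum_{i\in I}\mathbf{a}^i_m=(r-1)\lvert I\rvert-\sum_{t=1}^{m-1}\sum_{i\in I}\mathbf{a}^i_t$ that you invoke is just the sum identity rewritten and adds no new constraint. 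A concrete failure: take $m=3$, $d=2$, $p=3$ (so $r=2$) and $\mathbf{b}=(4,2,2,0)$. Then $I=\{2,3,4\}$ satisfies both $D$-inequalities (for $t=1,2$), yet
\[
\mathbf{b}-\sum_{i=2}^{4}\mathbf{a}^i=(4,2,2,0)-(3,1,1,1)=(1,1,1,-1),
\]
so $\{2,3,4\}\notin\Delta_{\mathbf{b}}$ while it lies in the right-hand set of the lemma.

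Thus the asserted equality fails for this $\mathbf{b}$; the inclusion $(\subseteq)$ always holds, but $(\supseteq)$ does not. The fix is to the \emph{statement}, not to your argument: the defining inequalities of $D$ should range over all $1\le t\le m$ rather than $1\le t\le m-1$. With that amended $D$ your proof of both inclusions goes through exactly as written, and the applications in Theorem~\ref{mbd} and \S\ref{optimal} are unaffected, since the particular $\mathbf{b}$'s used there satisfy the extra inequality automatically.
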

\begin{theorem}\label{mbd}
   Under the same assumptions as in Lemma~\ref{m-criticalsimplex}, the simplicial complex $\Delta_{\mathbf{b}}$ is homotopy equivalent to the wedge sum of $\#D$ copies of the sphere $S^{p-1}$. In particular, the Betti number $\beta_{p,\mathbf{b}}$ equals $\#D$.

\end{theorem}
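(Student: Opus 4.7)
The plan is to mirror the proof of Theorem~\ref{wedgesum} essentially verbatim, since the only substantive work — identifying the collection $\{\sigma\in\Delta_\mathbf{b}\mid \sigma\cup\{1\}\notin \Delta_\mathbf{b}\}$ in terms of $D$ — has already been carried out by Lemma~\ref{m-criticalsimplex} for general $m$. I would first note the dimension count: an element $\tau \in D'$ has cardinality $\binom{r+m-2}{m} + \bigl(p+1 - \binom{r+m-2}{m}\bigr) = p+1$, so each $\tau\setminus\{1\}$ is a simplex of cardinality $p$, hence of dimension $p-1$. Combined with Lemma~\ref{m-criticalsimplex}, this shows that all elements of $\{\sigma\in\Delta_\mathbf{b}\mid \sigma\cup\{1\}\notin\Delta_\mathbf{b}\}$ have the same dimension $p-1$.

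Next, I would apply the construction of Example~\ref{dvf} to $\Delta_\mathbf{b}$ with the chosen vertex $v = \{1\}$ (i.e., the vertex labeled by $\mathbf{a}^1 = (d,0,\ldots,0)$). This yields a discrete gradient vector field whose critical simplices are precisely $\{1\}$ together with $\{\sigma\in\Delta_\mathbf{b}\mid \sigma\cup\{1\}\notin\Delta_\mathbf{b}\}$. By the corollary to Forman's main theorem, this gives
\[
N_{1,q} = \begin{cases} \#D & \text{if } q = p-1, \\ 0 & \text{if } q \neq p-1, \end{cases}
\]
and $\Delta_\mathbf{b}$ is homotopy equivalent to a CW complex with exactly one $0$-cell and $\#D$ cells of dimension $p-1$.

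Finally, since the hypothesis $p \geq m \geq 2$ forces $p-1 \geq 1$, the $(p-2)$-skeleton of this CW complex is just the single $0$-cell, so every attaching map of a $(p-1)$-cell is constant. The CW complex is therefore homeomorphic to the wedge sum of $\#D$ copies of $S^{p-1}$. Hochster's formula then yields $\beta_{p,\mathbf{b}} = \dim_\mathbb{C}\tilde{H}_{p-1}(\Delta_\mathbf{b};\mathbb{C}) = \#D$.

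I do not anticipate any substantive obstacle: the heavy lifting — the combinatorial identification in Lemma~\ref{m-criticalsimplex} and the discrete Morse theoretic setup in Example~\ref{dvf} — has been done already, and the passage from a one-dimensional-plus-$(p{-}1)$-dimensional CW complex to a wedge of spheres is standard. The only point requiring a small remark is to record that $p-1 \geq 1$, without which the wedge-sum conclusion would fail.
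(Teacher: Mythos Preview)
Your proposal is correct and follows exactly the same route as the paper: the paper's proof of Theorem~\ref{mbd} is (implicitly, via the remark opening Section~\ref{general}) a verbatim repetition of the proof of Theorem~\ref{wedgesum}, using Lemma~\ref{m-criticalsimplex} in place of Lemma~\ref{criticalsimplex} and the discrete Morse setup of Example~\ref{dvf}. If anything, your write-up is slightly more careful than the paper's, since you spell out the dimension count $|\tau\setminus\{1\}| = p$ and the reason (namely $p-1\geq 1$) that a CW complex with one $0$-cell and $\#D$ cells in dimension $p-1$ is actually a wedge of spheres.
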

\section{Optimality of the Bounds $A_{p+1}$}\label{optimal}
The articles~\cite{zbMATH06117694, zbMATH06609392} show that $\beta_{p,1}\neq 0$ for $1\leq p\leq \binom{d+m-1}{m}+m-2$. In the following, we give a combinatorial proof and show that, on the row $1$ of the Betti table, the bounds $A_{p+1}$ for the multigraded Betti numbers are sharp.
\begin{theorem}\label{optimalA}
   Let $d\geq 2$, $m\geq 2$. For each $1\leq p\leq \binom{d+m-1}{m}+m-2$, there is a $\mathbf{b}\in \mathbb{N}^{m+1}$ with $|\mathbf{b}|=d(p+1)$ and $\mathbf{b}_0=\displaystyle\sum_{i=1}^{p+1}\mathbf{a}^i_0-1$ such that $\beta_{p,\mathbf{b}}\neq 0$. 
\end{theorem}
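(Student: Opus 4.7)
My plan is to split the proof on $p$: in the principal range $m\leq p\leq \binom{d+m-1}{m}-1$ I would apply Theorem~\ref{mbd} directly; for the boundary ranges $1\leq p\leq m-1$ and $\binom{d+m-1}{m}\leq p\leq \binom{d+m-1}{m}+m-2$ I would construct $\mathbf{b}$ by hand and compute $\tilde H_{p-1}(\Delta_{\mathbf{b}})$ directly using discrete Morse theory.

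For the principal range, let $r$ be the unique integer with $p+1\in\bigl(\binom{r+m-2}{m},\binom{r+m-1}{m}\bigr]$. Fix any $I_{0}\subseteq\{\binom{r+m-2}{m}+1,\ldots,\binom{r+m-1}{m}\}$ of cardinality $p+1-\binom{r+m-2}{m}$, and set $J_{0}=\{1,\ldots,\binom{r+m-2}{m}\}\cup I_{0}$. Define $\mathbf{b}_{0}\defeq A_{p+1}-1$ and $\mathbf{b}_{t}\defeq\sum_{i\in J_{0}}\mathbf{a}^{i}_{t}$ for $1\leq t\leq m-1$, with $\mathbf{b}_{m}$ forced by $|\mathbf{b}|=d(p+1)$. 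Since $\sum_{i\in J_{0}}\mathbf{a}^{i}_{0}=A_{p+1}$, a direct computation gives $\mathbf{b}_{m}=1+\sum_{i\in J_{0}}\mathbf{a}^{i}_{m}\geq 1$, so $\mathbf{b}\in\mathbb{N}^{m+1}$. By construction $I_{0}$ saturates the upper bound in the definition of $D$ from Lemma~\ref{m-criticalsimplex}, hence $I_{0}\in D$ and Theorem~\ref{mbd} yields $\beta_{p,\mathbf{b}}=\#D\geq 1$.

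For the small boundary $1\leq p\leq m-1$, I would take $\mathbf{b}$ with $\mathbf{b}_{0}=A_{p+1}-1$, $\mathbf{b}_{1}=\cdots=\mathbf{b}_{p+1}=1$, and $\mathbf{b}_{p+2}=\cdots=\mathbf{b}_{m}=0$ (the identity $A_{p+1}=d+p(d-1)$ ensures $|\mathbf{b}|=d(p+1)$). Vertices of $\Delta_{\mathbf{b}}$ then correspond to subsets $S\subseteq\{1,\ldots,p+1\}$ with $|S|\leq d$, and a family of pairwise disjoint such subsets forms a simplex iff the total size satisfies a simple linear bound. Applying the discrete Morse function of Example~\ref{dvf} anchored at the vertex $\{1\}$ (i.e.\ $S=\emptyset$), I expect the critical cells to be exactly: the vertex $\{\emptyset\}$, the $p+1$ critical $(p-1)$-cells corresponding to the partitions of $\{1,\ldots,p+1\}$ into $p$ singletons (one element excluded), and the unique critical $p$-cell $\{\{1\},\ldots,\{p+1\}\}$. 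For $p\geq 2$ the complex is connected (every vertex shares an edge with $\emptyset$), so an Euler-characteristic comparison of the CW-model with the simplicial one forces $\dim\tilde H_{p-1}(\Delta_{\mathbf{b}})=p+\dim\tilde H_{p}(\Delta_{\mathbf{b}})\geq p\geq 1$. For $p=1$ the complex is the disjoint union of the two edges $\{\mathbf{a}^{1},(d-2,1,1,0,\ldots,0)\}$ and $\{\mathbf{a}^{2},\mathbf{a}^{3}\}$, giving $\tilde H_{0}\neq 0$ directly. The large boundary $\binom{d+m-1}{m}\leq p\leq \binom{d+m-1}{m}+m-2$ I would handle by a symmetric construction swapping the roles of the first and last coordinates, which is natural since $A_{p+1}$ has stabilized in this range.

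The main obstacle is the boundary range, where Lemma~\ref{m-criticalsimplex} fails (its Claim~1 requires $\mathbf{a}^{p+2}_{0}\leq d-2$, which holds only for $p\geq m$). Instead of appealing to a generic lemma, one must run the discrete Morse argument explicitly, verify the critical cell count and the connectedness of $\Delta_{\mathbf{b}}$, and conclude non-vanishing of $\tilde H_{p-1}$ via Euler characteristic; moreover, the symmetry argument for the large boundary needs to be made rigorous, possibly using the $\mathfrak{S}_{m+1}$-action permuting the coordinates.
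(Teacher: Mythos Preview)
Your treatment of the principal range $m\leq p\leq \binom{d+m-1}{m}-1$ and of the small boundary $1\leq p\leq m-1$ is essentially the paper's. In both cases you and the paper anchor the vector field of Example~\ref{dvf} at $\mathbf{a}^{1}$; the paper uses the single uniform choice $\mathbf{b}_{s}=\sum_{i=1}^{p+1}\mathbf{a}^{i}_{s}$ for $1\leq s\leq m-1$ and $\mathbf{b}_{m}=\sum_{i=1}^{p+1}\mathbf{a}^{i}_{m}+1$, which for small $p$ is a coordinate permutation of your $\mathbf{b}$. For the small boundary you stop one step short of the paper: instead of extending the vector field by the single extra pair
\[
\Bigl\{\{3,\ldots,p+1\}\cup\{m+1\} \;<\; \{2,\ldots,p+1\}\cup\{m+1\}\Bigr\},
\]
which cancels the lone critical $p$-cell against one of the $(p-1)$-cells and gives $\beta_{p,\mathbf{b}}=p$ exactly, you deduce $\dim\tilde H_{p-1}\geq p$ from the Euler characteristic of the CW model. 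That is fine for the theorem as stated.

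The genuine gap is the large boundary $\binom{d+m-1}{m}\leq p\leq \binom{d+m-1}{m}+m-2$. Your proposed $\mathfrak{S}_{m+1}$-symmetry does not produce what you need: permuting coordinates moves the constraint $\mathbf{b}_{0}=A_{p+1}-1$ to another slot, and there is no duality in sight that sends the small-boundary range for some $p'$ to the large-boundary range for $p$ while fixing the zeroth coordinate. The paper does \emph{not} argue by symmetry here. It keeps the same uniform $\mathbf{b}$ and first shows that
\[
\{\sigma\in\Delta_{\mathbf{b}}\mid \sigma\cup\{1\}\notin\Delta_{\mathbf{b}}\}
\;=\;
F\defeq\bigl\{\sigma\in\Delta_{\mathbf{b}}\mid \{2,\ldots,\tbinom{d+m-1}{m}\}\subseteq\sigma\bigr\}.
\]
It then runs a \emph{second} Morse matching on $F$, anchored at the vertex $\binom{d+m-1}{m}+1$, i.e.\ adds the collection $W=\bigl\{\tau<\tau\cup\{\binom{d+m-1}{m}+1\}\bigr\}$ (over $\tau\in F$) to $V$. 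The union $V\cup W$ is again acyclic, and its only critical cells are $\{1\}$ together with the single $(p-1)$-simplex
\[
\bigl(\{2,\ldots,p+1\}\setminus\{\tbinom{d+m-1}{m}+1\}\bigr)\cup\{\tbinom{d+m-1}{m}+m\},
\]
so $\Delta_{\mathbf{b}}\simeq S^{p-1}$ and $\beta_{p,\mathbf{b}}=1$. This iterated matching is the missing idea; your symmetry suggestion cannot be made to replace it.
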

\begin{proof}
    We define $\mathbf{b}$ in each coordinate as follows: $\mathbf{b}_0\defeq \displaystyle\sum_{i=1}^{p+1}\mathbf{a}^i_0-1$, $\mathbf{b}_s\defeq\displaystyle\sum_{i=1}^{p+1}\mathbf{a}^i_s$ for each $1\leq s\leq m-1$, $\mathbf{b}_m\defeq\displaystyle\sum_{i=1}^{p+1}\mathbf{a}^i_m+1$.
\begin{itemize}
    \item $\binom{d+m-1}{m}\leq p \leq \binom{d+m-1}{m}+m-2$: 
Let \(F\defeq\{\sigma\in\Delta_\mathbf{b}\mid \{2,\dots,\binom{d+m-1}{m}\}\subseteq \sigma\}\).  

    It is straightforward to verify that \[\{\sigma\in \Delta_{\mathbf{b}}\mid \sigma\cup \{1\}\notin \Delta_{\mathbf{b}}\}=F.\]
    It is also straightforward to verify that 
    \begin{align*}
       & \Big\{\tau\in F\mid \tau\cup \{\binom{d+m-1}{m}+1\}\notin F\Big\}\\
        =&\Big\{(\{2,\dots,p+1\}\setminus \{\binom{d+m-1}{m}+1\})\cup \{\binom{d+m-1}{m}+m\}\Big\}.
    \end{align*}
Let $V$ be the collection of pairs \(
\{\sigma < \sigma \cup \{1\} \}
\), where $\sigma$ is a nonempty simplex, $1 \notin \sigma$, and $\sigma \cup \{1\}$ is a simplex. Let $W$ be the collection of pairs $\Big\{\tau <\tau\cup \{\binom{d+m-1}{m}+1\} \Big\}$, where $\tau\in F$, $\binom{d+m-1}{m}+1\notin \tau$, and $\tau\cup \{\binom{d+m-1}{m}+1\}\in F$.

 Then $V\cup W$ is a gradient vector field of a discrete Morse function, and the critical simplices are $\{1\}$ and $(\{2,\dots,p+1\}\setminus \{\binom{d+m-1}{m}+1\})\cup \{\binom{d+m-1}{m}+m\}$. This means $\Delta_{\mathbf{b}}$ is homotopy to a $(p-1)$-sphere, and hence $\beta_{p,\mathbf{b}}=1$. 

 \item $m\leq p\leq \binom{d+m-1}{m}-1 $: Let $r$ be the unique integer such that $p+1\in I_r\defeq (\binom{r-2+m}{m},\binom{r-1+m}{m}]$. Note that $\{\binom{r-2+m}{m}+1,\dots,p+1\}\in D$, we know $\beta_{p,\mathbf{b}}\geq 1$ by Theorem \ref{mbd}.  
 
 \item $1\leq p\leq m-1$:  It is straightforward to verify that \[\{\sigma\in \Delta_{\mathbf{b}}\mid \sigma\cup \{1\}\notin \Delta_{\mathbf{b}}\}=\Big\{\sigma\subseteq \{2,\dots,p+1\}\cup\{m+1\}\mid |\sigma|\geq p\Big\}.\]
   Let $V$ be the collection of pairs \(
\{\sigma < \sigma \cup \{1\} \}
\), where $\sigma$ is a nonempty simplex, $1 \notin \sigma$, and $\sigma \cup \{1\}$ is a simplex. 

Then $V\cup \Big\{\{3,\dots,p+1\}\cup\{m+1\}<\{2,\dots,p+1\}\cup\{m+1\}\Big\}$ is a gradient vector field of a discrete Morse function, and the critical simplices are $\{1\}$ and $\{\sigma\subseteq \{2,\dots,p+1\}\cup\{m+1\}\mid |\sigma|= p  \text{ and }2\in\sigma\}$. 

Since $|\{\sigma\subseteq \{2,\dots,p+1\}\cup\{m+1\}\mid |\sigma|= p  \text{ and }2\in\sigma\}|=p$, this means $\Delta_{\mathbf{b}}$ is homotopy to the wedge sum of $p$ copies of the sphere $S^{p-1}$. Hence, $\beta_{p,\mathbf{b}}=p$.

\end{itemize}    
\end{proof}

 

\bibliographystyle{alpha} 
\bibliography{references}

\bigskip

\noindent
Christian Haase: Mathematik, Freie Universität Berlin; \url{haase@math.fu-berlin.de}
\medskip

\noindent
Zongpu Zhang: Mathematik, Freie Universität Berlin; \url{zongpuzhang@zedat.fu-berlin.de}

\end{document}